\newcommand{\Acal}{\mathcal{A}}
\newcommand{\Bcal}{\mathcal{B}}
\newcommand{\Ucal}{\mathcal{U}}
\newcommand{\ch}{\mathbf{1}}
\newcommand{\R}{\mathbb{R}}
\newcommand{\N}{\mathbb{N}}
\newcommand{\al}{\alpha}
\newcommand{\del}{\delta}
\newcommand{\ep}{\epsilon}
\newcommand{\la}{\lambda}
\newcommand{\om}{\omega}
\newcommand{\Om}{\Omega}
\newcommand{\ol}{\overline}
\newcommand{\br}{\vspace{3 mm}}
\newcommand{\imp}{\Rightarrow}
\newcommand{\rest}{\upharpoonright}
\newcommand{\cls}{{\rm{cls\,}}}
\newcommand{\conv}{{{\rm{conv}\,}}}
\newcommand{\ext}{{\rm{ext\,}}}
\newcommand{\card}{{\rm{card\,}}}
\newcommand{\spann}{{\rm{span}}}
\newcommand{\Iso}{{\rm{Iso\,}}}
\newcommand{\bary}{{\rm{bary}}}
\newcommand{\ben}{\begin{enumerate}}
\newcommand{\een}{\end{enumerate}}
\newcommand{\vertiii}[1]{{\left\vert\kern-0.25ex\left\vert\kern-0.25ex\left\vert #1 
    \right\vert\kern-0.25ex\right\vert\kern-0.25ex\right\vert}}
\theoremstyle{plain}
\newtheorem{thm}{Theorem}[section]
\newtheorem{cor}[thm]{Corollary}
\newtheorem{lem}[thm]{Lemma}
\theoremstyle{definition}
\newtheorem{defn}[thm]{Definition}
\newtheorem{rmk}[thm]{Remark}
\newtheorem{exa}[thm]{Example}
\newtheorem{claim}[thm]{Claim}
\newtheorem{question}[thm]{Question}
\begin{document}

\title[On a question of Kazhdan and Yom Din]
{On a question of Kazhdan and Yom Din\\
\tiny{With an appendix by Nicolas Monod}}

\date{December 23, 2021}

\author{Eli Glasner}

\address{Department of Mathematics\\
     Tel Aviv University\\
         Tel Aviv\\
         Israel}
\email{glasner@math.tau.ac.il}
%
%

%\author[Nicolas Monod]{Nicolas Monod}
%\address{\'Ecole Polytechnique Fédérale de Lausanne (EPFL)\\
%CH–1015 Lausanne,
%Switzerland}
%\email{nicolas.monod@epfl.ch}
\thanks{Nicola Monod thanks Eli Glasner for his comments and for welcoming his appendix.}
\begin{abstract}
Studying a question of Kazhdan and Yom Din we first prove some fixed point theorems
for linear actions of a discrete countable group $G$ on a dual Banach space $V^*$,
and then show that the answer to the question in full generality is in the negative.
In an appendix it is shown that the answer is negative also for the Banach space $\Bcal(H)$
of bounded linear operators on an infinite dimensional Hilbert space.
\end{abstract}

\subjclass[2000]{37C25, 37B05, 37A40}

\keywords{affine actions, fixed point theorems, strong proximality}

\thanks{This research was supported by a grant of the Israel Science Foundation (ISF 1194/19)}

%\thanks{The research of E. G. was supported by a grant 
%of the Israel Science Foundation (ISF 1194/19)}
%
%\thanks{The research of Y. G. was supported by a grant 
%of the Israel Science Foundation (ISF 2095/15)}
%
%
%\dedicatory{Dedicated to Benjamin Weiss, a teacher and a friend}

\maketitle

\tableofcontents
\setcounter{secnumdepth}{2}

%\tableofcontents

\section*{Introduction}

In an effort to prove ``an approximate Schur's lemma" David Kazhdan and 
Alexander Yom Din 
arrived at the following question, which they communicated to us (Benjamin Weiss and the author).
%For the sake of convenience we call this question a ``conjecture".

\br

\begin{question}\label{conj-K}
Is the following assertion true ?
\begin{quote}
Let $\del >0$ be given and let $V$ be a Banach space, equipped with a linear and isometric action 
 of a discrete group $G$. Let $\al \in V^*$ be such that  $\|\al \| =1$ and 
 $\| g\al - \al \| \le \frac{\del}{10}$ for all $g \in G$. Then there exists a $G$-invariant
 $\beta \in V^*$ such that $\|\al - \beta\| \le  \del$.
 \end{quote}
 \end{question}

 The most interesting and relevant Banach space for Kazhdan and Yom Din
 is the space $\Bcal(H)$, comprising the bounded linear operators on a Hilbert space
 and equipped with  its weak operator topology.
 (On norm bounded sets of $\Bcal(H)$, the weak operator topology and the weak$^*$ 
 (= ultraweak) topology coincide.)

 \br
 
As a motivation one can consider the following easy example. 
% \section{An example where the delta conjecture holds}
 
% \begin{exa}\label{exa-linfty}
%Consider the Banach space $V = \ell_1(G)$ with dual $V^* = \ell_\infty(G)$, 
%where the latter is equipped with its sup norm.
%Clearly then the assertion in Question \ref{conj-K} holds in this case. 
%
%In fact, the action of $G$ on $\ell_\infty(G)$ is by translations:
%$$
%(g \cdot \al)(h) = \al(hg), \quad \forall  \ g, h \in G, \  \al \in \ell_\infty(G),
%$$ 
%and a function $\al \in \ell_\infty(G)$ is $G$-invariant iff it is a constant function.
%Now given $\al \in V^* = \ell_\infty(G)$ satisfying 
%(i) $\|\al \|=1$ and (ii) $\| g\al - \al \| \le \frac{\del}{10}$ for all $g \in G$,
%either the function $\ch \in \ell_\infty(G)$ is a $G$-fixed point with
%$\|\ch - \al\|  \leq \del$, or the function $-\ch \in \ell_\infty(G)$ is a $G$-fixed point with
%$\|\ch + \al\|  \leq \del$.
%%For, if $\|\ch - \al\| > \del$, then for some $g\in G$ we have
%%$|\al(g) - \ch(1)| = |\al(g) - 1|> \del$.
%Since $\|\al \|=1$, for some $h \in G$ we have either $|\al(h) - 1| \leq \frac{\del}{10}$
%or $|\al(h) + 1| \leq \frac{\del}{10}$.
%In the first case assume to the contrary that   
%\begin{align*}
% \del  & < |\al(g) \pm 1| \\
% &   \le |\al(g) - \al(h)| + | \al(h) \pm 1| \\
%&= |((h^{-1}g)\al)(h) -  \al(h)| + |\al(h) \pm 1|\\
%& \le \| (h^{-1}g)\al - \al\| +  |\al(h) \pm 1|  \leq   2 \frac{\del}{10},
% \end{align*}
%a contradiction.
% \end{exa}

\begin{exa}\label{exa-linfty}
Consider the Banach space $V = l_1(G)$ (over $\R$) with dual $V^* = l_\infty(G)$, 
where the latter is equipped with its sup norm.
Clearly then the assertion in Question \ref{conj-K} holds in this case. 

In fact, the action of $G$ on $l_\infty(G)$ is by translations:
$$
(g \cdot \al)(h) = \al(hg), \quad \forall  \ g, h \in G, \  \al \in l_\infty(G),
$$ 
and a function $\al \in l_\infty(G)$ is $G$-invariant iff it is a constant function.
Now given $\al \in V^* = l_\infty(G)$ satisfying 
(i) $\|\al \|=1$ and (ii) $\| g\al - \al \| \le \frac{\del}{10}$ for all $g \in G$,
either the function $\ch \in \ell_\infty(G)$ is a $G$-fixed point with
$\|\ch - \al\|  \leq \del$, or the function $-\ch \in \ell_\infty(G)$ is a $G$-fixed point with
$\|\ch + \al\|  \leq \del$.

Since $\|\al \|=1$, for some $h \in G$ we have either $|\al(h) - 1| \leq \frac{\del}{10}$
or $|\al(h) + 1| \leq \frac{\del}{10}$. Assuming the first case,
if $\|\ch - \al\| > \del$, then for some $g\in G$ we have
$|\al(g) - \ch(1)| = |\al(g) - 1|> \del$.
Now,
%asr
% since $\|\al \|=1$, for some $h \in G$ we have $|\al(h) - 1| \leq \frac{\del}{10}$, hence
\begin{align*}
 \del  & < |\al(g) - 1| \\
 &   \le |\al(g) - \al(h)| + | \al(h) -1| \\
&= |((h^{-1}g)\al)(h) -  \al(h)| + |\al(h) -1|\\
& \le \| (h^{-1}g)\al - \al\| +  |\al(h) -1|  \leq   2 \frac{\del}{10},
 \end{align*}
a contradiction. The second case is treated similarly
 \end{exa}

  \br
  
The  assertion of Question \ref{conj-K} certainly holds whenever the acting group $G$ is amenable.
%(where the $\del$ condition is redundant).
However, there are many general fixed point theorems which can be applied 
 also in cases where the acting group is any topological group.
Most notable are the classical Ryll Nardzewski's fixed point theorem and its many 
generalizations (see e.g. \cite{GM-12}). 

The question \ref{conj-K} though introduces a new perspective on the question 
of the existence of fixed points for affine actions on compact convex sets, namely the
$\del$-norm assumption.

\br

Our main results in this work are as follows.

In Section \ref{sec-RN} we present a short proof of a Ryll Nardzewski type
theorem for norm separable, convex weak$^*$-compact sets, Theorem \ref{RN}.
We also show how results of Losert \cite{Lo} and Bader, Gelander and Monod \cite{BGM}
provide a positive answer to question \ref{conj-K} for the Banach space $V = C(X)$,
the space of continuous complex valued functions on a compact Hausdorff space $X$.
In Section \ref{sec-examples} we show how Theorem \ref{RN}
may fail when the norm separability assumption is removed.
In particular, these examples show that a general fixed point theorem for
convex weak$^*$ compact sets
%(without the $\del$ assumption) 
fails for $V = \Bcal(H)$.
In Section \ref{sec-measures} we provide a new proof of the fact that 
Question \ref{conj-K} has an affirmative answer for the Banach space $V = C(X)$,
Theorem \ref{thm-CX}.

%
%
%prove the assertion of Question \ref{conj-K} for Banach spaces of the form
%$C(X)$, where $X$ is a compact space. 
%
Finally, in Section \ref{sec-CE} we show that for $G = F_2$, the free group
on two generators, given $0 < \del <1$, there exists a separable Rosenthal Banach space $V$
\footnote{A Banach space is called {\em Rosenthal} if it does not contain an isomorphic copy
 of $\ell_1(\N)$.}, specially contrived for that purpose, 
and an affine representation of a minimal and strongly proximal
$G$ dynamical system $(X,G)$ on $V^*$, such that the resulting action of $G$ on $V^*$
has the following properties.
The only $G$ fixed point in $V^*$ is $0$, but
there exists an element $\xi \in V^*$, $\|\xi\|=1$ 
%and a  closed ball of radius $10\del$ centered at $\xi$ which is $G$-invariant.
such that $\|g\xi - \xi\| \leq \del$ for every $g \in G$.
%It admits only $0$ as a fixed point but, has many $G$-invariant, weak$^*$-compact convex subsets,
%$K \subset V^*$, with $0 \not \in K$, and such that the diameter of $K$ is arbitrarily small.
This shows that, in full generality, the assertion of Question \ref{conj-K} fails.
%The case $V = \Bcal(H)$ remains open.

%
%It admits only $0$ as a fixed point but, has many $G$-invariant, weak$^*$-compact convex subsets,
%$K \subset V^*$, with $0 \not \in K$, and such that the diameter of $K$ is arbitrarily small.
%This shows that, in full generality, the Conjecture \ref{conj-K} fails.
%The case $V = \Bcal(H)$ remains open.
%

An important aspect of the present work is the application of the newly developed theory of 
representations of dynamical systems on Banach spaces. This theory was developed by
Michael Megrelishvili and the author in a series of works. We refer to the review article \cite{GM-R}
for more details.
%In particular, the sharp contrast between representations on ``tame" Rosenthal Banach spaces
%and on ``wild" Banach spaces like $C(X)$, which is derived from the 
%Bourgain, Fremlin and Talagrand dichotomy \cite{BFT}, is further emphasized here in the contrast between 
%Theorem \ref{thm-CX} on the one hand  and Theorem \ref{thm-CE} on the other.

I would like to thank David Kazhdan and Alexander Yom Din for addressing the question to us.
I thank Benjamin Weiss and Michael Megrelishvili whose invaluable advice greatly improved this work.
Also, thanks are due to the referee for many very helpful remarks.

\br 

When this work was accepted for publication I posted it on the arXiv \cite{arXiv}.
After reading my work there Nicolas Monod obtained a negative answer to  
the question of Kazhdan and Yom Din in the case of the space $\Bcal(H)$.
He then kindly agreed to publish it as an appendix to this paper.

\br

\section{Ryll Nardzewski theorem for norm separable $w^*$-compact convex sets}\label{sec-RN}

\begin{defn}\label{def-sp}
A dynamical system $(X,G)$ is called {\em proximal} if 
for every pair of points $x, y \in X$ there is a net $g_i \in G$
and a point $z \in X$ such that $\lim g_i x = \lim g_i y =z$;
it is called {\em strongly proximal} if the induced action
on the space $M(X)$ of Borel probability measures, equipped with its weak$^*$ topology,
is proximal, or equivalently, when for every $\mu \in M(G)$ there is a net $g_i \in G$
and a point $x \in X$ such that $\lim g_i \mu = \del_x$.
Both proximality and strong proximality are
preserved under surjective homomorphisms of dynamical systems. For more details see \cite{Gl-prox}. 
\end{defn}

The following theorem is not new (see \cite[Theorem 2.4]{V},  
\cite[Theorem 15]{N-P} and also \cite{GM-12}); however, our proof
adapted from \cite[Chapter III, Theorem 5.2]{Gl-prox}, is more transparent
and much shorter.

\begin{thm}\label{RN}
Let $V$ be a Banach space and $K$ a subset of $V^*$ which is 
(i) {\bf norm separable}, (ii) convex, (iii) closed in the weak$^*$ topology.
Let $G < Iso(V)$ be a group of linear isometries such that $gx \in K$ for every $g \in G$ and $x \in K$
(with respect to the natural action of $G$ on $V^*$).
Then there is a point $x_0 \in K$ such that $gx_0 =x_0$ for every $g \in G$.
\footnote{Nicolas Monod brought  to my attention the fact that this result can be found in Bourbaki's TVS book \cite[EVT IV.41, case (c)]{Bour}.}
\end{thm}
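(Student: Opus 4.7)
The plan is a Ryll Nardzewski-type argument: reduce to a $w^*$-compact sub-case, apply Zorn's lemma to extract a minimal non-empty $G$-invariant $w^*$-closed convex subset $Q$, and show that the isometric (hence non-contracting) action together with norm separability forces $Q$ to be a singleton, which is the required fixed point.

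First, I would replace $K$ by $K_0 := \overline{\conv}^{w^*}(Gx_0)$ for an arbitrary $x_0 \in K$. Because $G$ acts by isometries on $V^*$, the orbit $Gx_0$ lies in the ball of radius $\|x_0\|$, so $K_0$ is norm-bounded and $w^*$-closed, hence $w^*$-compact by Banach--Alaoglu. It is convex, $G$-invariant, contained in $K$, and norm separable, so one may assume $K$ itself is $w^*$-compact. I would then exploit norm separability to make $(K, w^*)$ metrizable: given a countable norm-dense $\{z_n\} \subset K$, pick unit vectors $v_{n,m} \in V$ (for $n \ne m$) with $|\langle v_{n,m}, z_n - z_m \rangle| \ge \tfrac12 \|z_n - z_m\|$. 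A short density argument using the $w^*$-lower semi-continuity of the norm shows that the countable family $\{v_{n,m}\}$ separates all points of $K$; the topology it induces on the compact space $K$ is a weaker Hausdorff topology and hence equals the $w^*$-topology, while it is visibly metrizable.

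Zorn's lemma applied within $K$, relying on $w^*$-compactness to intersect descending chains, yields a minimal non-empty $G$-invariant $w^*$-closed convex subset $Q \subseteq K$. The theorem then reduces to proving $|Q|=1$. Suppose for contradiction that $x \ne y$ both lie in $Q$, and set $d = \|x - y\| > 0$ and $m = \tfrac{x+y}{2}$. Minimality forces $\overline{\conv}^{w^*}(Gm) = Q$, so every point of $Q$ is a $w^*$-limit of convex combinations from the orbit $Gm$. Using metrizability I would pass to a sequence $g_n \in G$ and, after sub-extraction, assume $g_n x \to x^\ast$ and $g_n y \to y^\ast$ in $w^*$; affineness gives $g_n m \to \tfrac{x^\ast + y^\ast}{2}$, and the $w^*$-lower semi-continuity of the norm combined with $\|g_n x - g_n y\| = d$ gives $\|x^\ast - y^\ast\| \le d$. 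From this data one constructs a proper non-empty $G$-invariant $w^*$-closed convex subset of $Q$, for instance a sub-level set of a suitable $G$-invariant $w^*$-lower semi-continuous convex displacement functional, contradicting the minimality of $Q$.

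The main obstacle is precisely this last step: the non-contracting (isometric) property must be leveraged to certify that the extracted subset is simultaneously $G$-invariant and strictly smaller than $Q$. This is where the short proof adapted from \cite[Chapter III, Theorem 5.2]{Gl-prox} saves work over the classical Ryll Nardzewski route.
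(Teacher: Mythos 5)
There is a genuine gap, and you have in fact flagged it yourself: the entire content of a Ryll--Nardzewski-type theorem is the final step showing that the minimal set $Q$ is a singleton, and your proposal does not carry it out. The data you extract --- $g_n x \to x^\ast$, $g_n y \to y^\ast$ in $w^*$ with $\|x^\ast - y^\ast\| \le d$ --- contains no contradiction whatsoever: isometries preserve the distance $d$ exactly, and $w^*$-lower semicontinuity only gives an inequality in the harmless direction. The ``suitable $G$-invariant $w^*$-lower semi-continuous convex displacement functional'' is never exhibited, and for an isometric (as opposed to contracting or distal-with-dentability) action there is no obvious candidate: natural choices such as $z \mapsto \sup_{g}\|gz - c\|$ are neither $w^*$-lower semicontinuous in a usable way nor do their sub-level sets come out $G$-invariant and proper. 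So the proof stops exactly where the theorem begins.

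Worse, the strategy as outlined cannot be completed, because it uses norm separability of $K$ only to make $(K, w^*)$ metrizable, and metrizability is provably insufficient: the paper's own Section on non-separable examples produces, for any countable non-amenable $G$ (e.g.\ $G = F_2$), a minimal strongly proximal $X \subset \{0,1\}^G \subset \ell_\infty(G) = \ell_1(G)^*$ whose $w^*$-closed convex hull $Q$ is a compact convex $G$-invariant set with \emph{no} fixed point --- and here $V = \ell_1(G)$ is separable, so $(Q, w^*)$ \emph{is} metrizable. Hence any argument from ``minimal $Q$, isometric action, $w^*$-metrizable'' alone must fail; norm separability has to enter the contradiction itself. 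That is what the paper's proof does: after reducing to an irreducible affine system, it sets $X = \overline{\mathrm{ext}\,(K)}$, uses norm separability to cover $X$ by countably many sets $x_i + \varepsilon B$ with $B$ the (shown to be $w^*$-closed) unit ball, applies the Baire category theorem to get a nonempty relatively $w^*$-open $W \cap X$ of norm diameter at most $2\varepsilon$, and then invokes the fact that $X$ is the unique minimal set of the irreducible system and that $(X,G)$ is \emph{strongly proximal} (\cite[Chapter III, Theorem 2.3]{Gl-prox}) to move any two points $x, y \in X$ simultaneously into $W$, contradicting isometry unless $X$, and hence $K$, is a point. Your preliminary reductions (passing to $\overline{\conv}^{w^*}(Gx_0)$ for compactness, Zorn for minimality) are fine and consistent with the paper, but the missing ingredients --- the extreme-point set, the Baire category use of separability, and strong proximality of the minimal set --- are precisely the proof.
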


\begin{proof} 
We can assume that the affine dynamical system $(K,G)$ is irreducible  (i.e.
if $\emptyset \not = Q \subset K$ is $G$-invariant, convex and weak$^*$ closed, then $Q = K$).

Let $B$ be a norm closed ball of radius $1$ centered at $0 \in V^*$.
Denote  $X = \ol{\ext (K)}$ (the $w^*$-closure of the set of extreme points).

By the separability of $K$, given $\ep >0$ there exists a set $\{x_i\}_{i=1}^\infty$ of points in $X$ such that 
$\{x_i + \ep B\}_{i=1}^\infty$ is a denumerable cover of $X$. 
We claim that $B$ is weak$^*$ closed. In fact, if $y \not \in B$
there is a vector $v  \in V$ such that 
%asr
$r = y(v) > 1$ and then 
$$
\{z \in V^* : z(v)  > \frac{r +1}{2}\} 
$$
is a weak$^*$ open neighborhood of $y$ which is disjoint from $K$.

By Baire's category theorem there is an $i$ and a non-empty weak$^*$ open set $W$ such that 
$$
W \cap X \subset (x_i + \ep B) \cap X.
$$

Since $K$ is irreducible, $X$ is the unique minimal set of the system $(K,G)$
and it is strongly proximal (\cite[Chapter III, Theorem 2.3]{Gl-prox}, see Definition \ref{def-sp} below).
If $x, y \in X$ we can find $g \in G$ such that $gx$ and $gy$ are in $X \cap W \subset x_i + \ep B$.
Since $\ep$ was arbitrary, this contradicts the fact
that the elements of $G$ act by norm isometries, unless $X$ and hence also $K$ are trivial
one point sets.
\end{proof}

\br

Recall that a Banach space $V$ is an {\em Asplund\/} space if the
dual of every separable linear subspace of $V$ is separable (iff $V^*$ has the Radon-Nikod\'ym
property). 
Reflexive Banach spaces and spaces of the type $c_0(\Gamma)$ are Asplund. 
For more details see e.g. \cite{Fa}.

\begin{cor}
Let $V$ be an Asplund Banach space and $K$ a subset of $V^*$ which is 
 (i) convex, (ii) closed in the weak$^*$ topology.
Let $G < Iso(V)$ be a countable group of linear isometries such that $gx \in K$ for every $g \in G$ and $x \in K$
(with respect to the natural action of $G$ on $V^*$).
Then there is a point $x_0 \in K$ such that $gx_0 =x_0$ for every $g \in G$.
\end{cor}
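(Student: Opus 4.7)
The plan is to adapt the argument of Theorem~\ref{RN}, replacing the norm-separability/Baire-category step with the norm-fragmentation property that characterizes duals of Asplund spaces. First, pick any $x_0 \in K$ and form $K' := \ol{\conv}^{w^*}(G x_0) \subset K$. Since $G$ acts by isometries, $Gx_0$ is norm-bounded, so $K'$ is weak$^*$-compact by Banach--Alaoglu; it is also convex, $G$-invariant and contained in $K$, so a $G$-fixed point in $K'$ will suffice. A Zorn argument on nonempty weak$^*$-closed convex $G$-invariant subsets of $K'$ (whose chains have lower bounds by weak$^*$-compactness) then lets me assume $(K',G)$ is irreducible in the sense used in the proof of Theorem~\ref{RN}.

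As in that proof, I would set $X := \ol{\ext(K')}$ (weak$^*$-closure), which is nonempty by Krein--Milman; the irreducibility of $(K',G)$ together with \cite[Chapter III, Theorem~2.3]{Gl-prox} implies that $(X,G)$ is a minimal strongly proximal subsystem. The key new input is the following standard characterization of Asplund spaces (see e.g.\ \cite{Fa}): $V$ is Asplund if and only if every weak$^*$-compact subset $C \subset V^*$ is norm-fragmented by the weak$^*$-topology, meaning that for every nonempty $A \subset C$ and every $\ep > 0$ there is a weak$^*$-open set $W$ with $\emptyset \ne W \cap A$ of norm-diameter less than $\ep$. Applying this to $A = X$ yields exactly the kind of small-norm-diameter weak$^*$-open slice that the Baire-category argument provided in the proof of Theorem~\ref{RN}.

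The endgame is then identical: strong proximality of $(X,G)$ allows me, given any $x,y \in X$, to find $g \in G$ with $gx,gy \in W$, whence $\|x-y\|=\|gx-gy\|<\ep$; letting $\ep \to 0$ collapses $X$ to a singleton, and by Krein--Milman $K' = \ol{\conv}^{w^*}(X)$ collapses with it, producing the desired $G$-fixed point. The main obstacle in this plan is simply locating and correctly invoking the fragmentation characterization of Asplund duals; once it is in hand the rest is a direct substitution into the proof of Theorem~\ref{RN}, and in fact countability of $G$ is not used by this approach.
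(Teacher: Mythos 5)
Your proof is correct, but it takes a genuinely different route from the paper. The paper's proof uses the countability of $G$ in an essential way: every vector of $V$ lies in a $G$-invariant separable subspace $V_i$, the Asplund property is invoked only through its definition (separable subspaces have separable duals) to make each quotient $V_i^* \cong V^*/V_i^{\perp}$ separable, Theorem \ref{RN} is applied in each $V_i^*$ to show that the image of an irreducible subset $K_0 \subset K$ there is a singleton, and finally $K \cong \varprojlim K_i$ collapses $K_0$ itself to the fixed point. You instead bypass the reduction entirely and rerun the proof of Theorem \ref{RN}, replacing its Baire-category step (which is where norm separability entered) by the fragmentation characterization of Asplund spaces: every bounded (equivalently, weak$^*$-compact) subset of the dual of an Asplund space is weak$^*$-fragmented by the norm — this is indeed standard, going back to Namioka--Phelps \cite{N-P} and available in \cite{Fa}, and it hands you exactly the small-norm-diameter relatively weak$^*$-open slice of $X=\ol{\ext(K')}$ that the countable cover plus Baire category produced before; the strong-proximality endgame is then verbatim the same. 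What your route buys is a strictly stronger conclusion: countability of $G$ is never used, so the corollary holds for arbitrary groups of isometries, consistent with the Veech/Namioka--Phelps/Bourbaki results the paper cites around Theorem \ref{RN}. What it costs is the deeper input — the fragmentation theorem is a substantial piece of Asplund-space theory, whereas the paper's argument needs only the definition plus the already-established Theorem \ref{RN}. One further point in your favor: by first passing to $K' = \ol{\conv}^{w^*}(Gx_0)$, which is norm-bounded (isometries preserve the norm of $x_0$) and hence weak$^*$-compact by Alaoglu, you make the Zorn argument for irreducibility and the use of Krein--Milman legitimate even when $K$ is unbounded — a compactness point that both the statement and the paper's proof leave implicit.
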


\begin{proof}
%Since $G$ is countable and $V$ is Asplund, the assumption (i) in Theorem \ref{RN} is redundant.
In the case that $V$ is a separable Banach space the assumption (i) in Theorem \ref{RN} is redundant.
In the general case we argue as follows. First observe that, as we assume that $G$ is a countable group,
every element of $V$ is contained in a $G$-invariant separable subspace of $V$.
Let $V_i,\ i \in I$, be the collection of such subspaces, partially ordered by inclusion.
We then have for each $i \in I$, that $V_i^* \cong V^*/V_i^\perp$ (see e.g \cite[Proposition 2.7]{FHH}).
We let $K_i$ be the canonical image of $K$ in $V_i^*$, and observe that
$K \cong \lim_{\leftarrow} K_i$. Let $K_0 \subset K$ be an irreducible subset
(i.e. a $G$-invariant, weak$^*$-closed,  convex subset which is minimal with respect to these properties;
such a subset always exists by Zorn's lemma, see \cite[Section III.2]{Gl-prox}).
Now observe that by the Asplund property the image of $K_0$ in each $V_i^*$ is a single point
and thus we finally conclude that $K_0$ itself is a singleton whose unique 
%asr
element is the required fixed point. 
\end{proof}

\br

Since on norm bounded sets of $\Bcal(H)$ the weak operator topology and the weak$^*$ topology coincide
we also have this theorem in the context of $\Bcal(H)$.

\begin{thm}\label{RNO}
Let $H$ be a separable Hilbert space and $K$ a subset of $\Bcal(H)$ which is 
(i) {\bf norm separable}, (ii) convex, (iii) closed in the weak operator topology.
Let $G < \Ucal(H)$ be a group of unitary operators such that $gx \in K$ for every $g \in G$ and $x \in K$.
Then there is a $G$ fixed point in $K$.
\end{thm}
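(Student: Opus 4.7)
The plan is to deduce Theorem \ref{RNO} directly from Theorem \ref{RN} by regarding $\Bcal(H)$ as the dual of the Banach space $V:=\Tcal(H)$ of trace-class operators on $H$, with pairing $\langle x,\tau\rangle=\operatorname{tr}(x\tau)$. The isometric left $G$-action $\phi(g)\tau:=\tau g^{-1}$ on $\Tcal(H)$ (which is a trace-norm isometry because $g$ is unitary) has for its dual action on $\Bcal(H)=V^*$ exactly the given left multiplication $x\mapsto gx$; hence $G<\Iso(V)$ and the general framework of Theorem \ref{RN} applies.

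The one point that needs extra attention is that Theorem \ref{RN} requires $K$ to be weak$^*$-closed in $V^*$, whereas we are only given that $K$ is closed in the weak operator topology. These two topologies agree on norm-bounded subsets of $\Bcal(H)$ but not globally, so the plan is to first restrict to a norm ball. Pick any $x_0\in K$ and set $K':=K\cap\{x\in\Bcal(H):\|x\|\le\|x_0\|\}$. The operator-norm ball is weak-operator closed (each functional $x\mapsto\langle xv,w\rangle$ being weak-operator continuous), so $K'$ is weak-operator closed, convex, norm separable, nonempty, and bounded. It is also $G$-invariant, because $G$ acts on $\Bcal(H)$ by operator-norm isometries.

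Since $K'$ is bounded, the weak operator and weak$^*$ topologies coincide on it, so $K'$ is weak$^*$-closed in $V^*$. All hypotheses of Theorem \ref{RN} are now met, and a direct application produces a $G$-fixed point in $K'\subset K$. The main point requiring care in this argument is simply the interplay of the two topologies on bounded sets; beyond this, the statement is essentially a formal consequence of Theorem \ref{RN}.
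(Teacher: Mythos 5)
Your proof is correct and takes essentially the same route as the paper, which deduces Theorem \ref{RNO} from Theorem \ref{RN} in a single remark via the identification $\Bcal(H)=\Tcal(H)^*$ and the coincidence of the weak operator and weak$^*$ topologies on norm-bounded sets. Your restriction to the bounded slice $K'=K\cap\{x:\|x\|\le\|x_0\|\}$ just makes explicit the boundedness (hence weak$^*$-compactness) that the paper's one-line argument leaves implicit, and is a sound way to do so.
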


\br

Another application of the Ryll Nardzewski theorem is the following theorem which is a special case of
\cite[Theorem A]{BGM} and \cite{Lo}.

\begin{defn}
A Banach space $V$ is said to be {\em  L-embedded}  
if its bidual space $V^{**}$ can be decomposed as 
$V^{**}= V \oplus_1 V_0$ for some $V_0 \subset V^{**}$
(where $\oplus_1$ indicates that the norm is the sum of the norms on $V$ and $V_0$).
\end{defn}

As mentioned in \cite{BGM},
by the Yosida-Hewitt decomposition \cite{YH}, every $L_1$ space and 
more generally every predual of any von Neumann algebra \cite[Theorem III, 2.14]{T}
is L-embedded; in particular, this is the case for the dual of any $C^*$-algebra
(see \cite[Theorem 1.17.2]{Sa}).

\begin{thm}\cite[Theorem A]{BGM}\label{thm-BGM}
Let $A$ be a non-empty bounded subset of an $L$-embedded Banach space $V$.
Then there is a point in $V$ fixed by every linear isometry of $V$ preserving $A$. 
Moreover, one can choose a fixed point which minimises $\sup_{a \in A} \|v - a\|$ over all $v \in V$.
\end{thm}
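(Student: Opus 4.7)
The plan is to realise the required fixed point as a member of the Chebyshev center of $A$, exploiting the $L$-decomposition $V^{**}=V\oplus_1 V_0$ to confine that center to $V$ itself, and then applying the classical Ryll Nardzewski theorem to what turns out to be a weakly compact convex subset of $V$.

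First I would work in $V^{**}$: set $h(\xi)=\sup_{a\in A}\|\xi-a\|_{V^{**}}$, with $A$ embedded canonically, and let $r=\inf_{V^{**}} h$. Each map $\xi\mapsto\|\xi-a\|$ is weak$^*$ lower semicontinuous as a supremum of weak$^*$ continuous linear functionals coming from $V^*$, so $h$ is weak$^*$ lsc and convex. On a sufficiently large weak$^*$ compact ball $h$ attains its infimum, producing a non-empty, weak$^*$ compact, convex Chebyshev center $C^{**}=\{\xi\in V^{**}:h(\xi)=r\}$.

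The key step, where $L$-embeddedness is used, is to show $C^{**}\subset V$. Decompose $\xi\in V^{**}$ as $\xi=v+w$ with $v\in V$ and $w\in V_0$. For any $a\in A\subset V$ the difference $\xi-a=(v-a)+w$ has $v-a\in V$ and $w\in V_0$, so the defining identity of $\oplus_1$ gives $\|\xi-a\|=\|v-a\|+\|w\|$. Taking suprema yields $h(\xi)=h(v)+\|w\|$, so the equation $h(\xi)=r$ forces $w=0$, i.e.\ $\xi=v\in V$. Setting $C:=C^{**}$, this weak$^*$ compact subset of $V^{**}$ lies entirely inside $V$, and since the restriction of the weak$^*$ topology of $V^{**}$ to $V$ is just the weak topology of $V$, the set $C$ is weakly compact in $V$.

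Finally, any linear isometry of $V$ preserving $A$ preserves $h|_V$ and therefore preserves $C$, and acts affinely on it. I would invoke the classical Ryll Nardzewski theorem for a group of affine isometries on a non-empty weakly compact convex set, the non-contracting hypothesis being automatic from isometry, to obtain a common fixed point $v_0\in C$; by construction $v_0$ minimises $\sup_{a\in A}\|v-a\|$ over $V$, delivering both assertions at once. I expect the main obstacle to be the clean additivity $h(\xi)=h(v)+\|w\|$ and the resulting inclusion $C^{**}\subset V$; once this is in place, everything else follows from standard Banach space arguments.
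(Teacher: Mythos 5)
Your proof is correct, and it is essentially the argument of Bader--Gelander--Monod themselves: the present paper states the theorem as a quotation of \cite{BGM} without giving a proof, and the proof there is exactly your circumcentre scheme --- minimise $h(\xi)=\sup_{a\in A}\|\xi-a\|$ over $V^{**}$ using weak$^*$ lower semicontinuity of the bidual norm together with the coercivity bound $h(\xi)\ge\|\xi\|-\sup_{a\in A}\|a\|$ (which is what makes ``a sufficiently large ball'' legitimate and makes the centre set bounded, hence weak$^*$ compact), push the centre set into $V$ via the $\oplus_1$ additivity $\|(v-a)+w\|=\|v-a\|+\|w\|$, and conclude by Ryll--Nardzewski on the resulting weakly compact convex set, whose very definition yields the ``moreover'' clause. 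Two micro-points you should make explicit: the step ``$h(\xi)=r$ forces $w=0$'' is because $v\in V\subset V^{**}$ gives $h(v)\ge r$, whence $r=h(\xi)=h(v)+\|w\|\ge r+\|w\|$; and since a linear isometry of $V$ need not be surjective, the isometries preserving $A$ form a priori only a semigroup acting on the centre set, so one should invoke the semigroup form of the Ryll--Nardzewski theorem, for which your distality observation ($\|gx-gy\|=\|x-y\|$ bounds the orbit of a difference away from $0$) is precisely the required hypothesis.
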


Thus if $V$ is an $L$-embedded Banach space,  $G$ is a group of linear isometries
of $G$, and for some $a_0  \in V$ and $\del > 0$ we have
$\|ga_0 - a_0\| \le \del, \ \forall g \in G$, then with $A = \{ga_0 : g \in G\}$,
by the theorem above, there is a $G$-fixed point $b \in V$ 
with $\|b - a_0\| \leq  \sup_{a \in A} \|b - a\|  \leq \sup_{a \in A} \|a_0 - a\| \leq \del$.
This confirms the assertion of Question \ref{conj-K} for $L$-embedded Banach spaces.

\begin{thm}\label{thm-BGMc}
The assertion of Question \ref{conj-K} holds for every  $L$-embedded Banach space $V$.
\end{thm}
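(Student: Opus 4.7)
The plan is to derive Theorem \ref{thm-BGMc} as a direct corollary of Theorem \ref{thm-BGM}. Following the notation used in the paragraph motivating the statement, I would take the given $L$-embedded Banach space $V$, the discrete group $G$ acting on $V$ by linear isometries, and an element $a_0 \in V$ with $\|a_0\| = 1$ and $\|g a_0 - a_0\| \le \del/10$ for every $g \in G$. The natural candidate for the set $A$ required by Theorem \ref{thm-BGM} is the full $G$-orbit
\[
A = \{ g a_0 : g \in G \}.
\]

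First I would verify the two hypotheses of Theorem \ref{thm-BGM}. The set $A$ is bounded: since each $g \in G$ is an isometry, $\|g a_0\| = \|a_0\| = 1$ for every $g$, so $A$ is contained in the unit sphere of $V$. Moreover, every element of $G$ is a linear isometry of $V$ that preserves $A$, because $G$ clearly permutes its own orbit. Theorem \ref{thm-BGM} then furnishes a $G$-fixed point $\beta \in V$ which, crucially, minimises $v \mapsto \sup_{a \in A} \|v - a\|$ over all $v \in V$.

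The conclusion then follows by using $a_0$ itself as a competitor in this minimisation. By the almost-invariance hypothesis,
\[
\sup_{a \in A} \|a_0 - a\| \;=\; \sup_{g \in G} \|g a_0 - a_0\| \;\le\; \frac{\del}{10}.
\]
Combining this with the minimising property of $\beta$ gives
\[
\|\beta - a_0\| \;\le\; \sup_{a \in A}\|\beta - a\| \;\le\; \sup_{a \in A}\|a_0 - a\| \;\le\; \frac{\del}{10} \;\le\; \del,
\]
which is exactly the conclusion of Question \ref{conj-K}. There is no real obstacle to surmount here: Theorem \ref{thm-BGM} does all the heavy lifting, and the argument amounts only to choosing the correct orbit and exploiting the fact that the fixed point provided is a Chebyshev centre for $A$.
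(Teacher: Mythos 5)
Your proposal is correct and is essentially identical to the paper's own argument: the paper likewise takes $A=\{ga_0 : g\in G\}$, applies Theorem \ref{thm-BGM} to obtain a $G$-fixed point $b$ minimising $\sup_{a\in A}\|v-a\|$, and concludes via the same chain $\|b-a_0\|\le \sup_{a\in A}\|b-a\|\le \sup_{a\in A}\|a_0-a\|\le\del$. Your explicit verification that $A$ is bounded and $G$-invariant, and your use of $a_0$ as a competitor in the minimisation, merely spell out steps the paper leaves implicit.
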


In particular since  $C(X)^*$ is the dual of the (commutative) $C^*$-algebra $C(X)$ and 
hence the predual of a von Neumann algebra, 
this confirms the assertion of Question 
\ref{conj-K} for $V = C(X)$. In Section \ref{sec-measures} we will provide a new proof of this theorem.

\br

\section{Examples where $K$ is not norm separable}\label{sec-examples}

%\begin{rmk}
%As we observed in example \ref{exa-linfty} the only fixed points of the natural action of $G$ 
%on $\ell_\infty(G)$ are the constants.
As we will now see,
the assumption that the set $K$ (or $X$) is norm separable 
in Theorems \ref{RN} and \ref{RNO} is essential.
% the set $\Xi_1 = \{\rho(g) R : g \in G\}$, as a continuous image of $G$, is norm separable,
%and hence so is $\Xi_2$.
%It is not clear to me whether the WO closure $\Xi_3$ is still separable.
%\end{rmk}

\br

When one considers $\ell_\infty(G)$ as a subspace of $\Bcal(H)$, with $H = \ell_2(G)$,
and elements $x \in \ell_\infty(G)$ are viewed as multiplication operators, 
it is easy to check that a subset $X \subset [0,1]^G$,
is closed in $[0,1]^G$
iff $X$ is closed in the weak$^*$ topology on
$\ell_\infty(G)$ (induced by $\ell_1(G)$), iff it is closed in the weak operator topology in $\Bcal(H)$.

When $X$ is a Bebutov subshift; i.e. a closed subset of $[0,1]^G$ invariant under left translations,
the induced Koopman unitary representation $\pi : G \to \Ucal(H)$ defines an associated
representation $\rho : G \to \Bcal(H)$, $\rho(g) T : = \pi(g) \circ T \circ \pi(g)^{-1}$.
Note that, when $X \subset \{0,1\}^G$ then,
viewed as a subset of $\Bcal(H)$, it consists of projections.

\br

Hereditarily almost equicontinuous systems (HAE for short) were defined and studied in \cite{GM}
%(see Definition \ref{def-HAE} below).

\begin{defn}\label{def-HAE}
Given a (compact metric) dynamical system $(X,G)$,
a point $x \in X$ is called an {\em equicontinuity point}
if for every $\ep >0$ there is a $\del >0$ such that
$d(x,x') <\del \imp d(gx, gx') < \ep$ for every $g \in G$.
%When a point transitive system (i.e. a system with the property that
%there is a point $x \in X$ whose orbit is dense) then
%then the existence of one equicontinuity point implies that
%the sets $Eq(X)$ of equicontinuity points and $Tr(X)$
When the set $Eq(X)$ of equicontinuity points is dense in $X$ we say that the system $(X,T)$
is {\em almost equicontinuous} and we say that 
it is {\em hereditarily almost equicontinuous} (HAE for short)
if every subsystem $Y \subset X$ is almost equicontinuous.
\end{defn}

\begin{rmk}
A (not necessarily metric) dynamical system $(X,G)$ is called {\em not sensitive} when 
there exists an entourage $\ep$ (= a neighborhood of the diagonal in $X \times X$) such that for
every $x\in X$ and any neighborhood $U$ of $x$ there exists $y\in 
U$ and $g\in G$ such that $(gx,gy) \notin \ep$.
It is called {\em hereditarily non sensitive} (HNS for short)
when every subsystem $Y$ of $X$ is not sensitive.
 For metrizable systems the properties HNS and HAE coincide.  For more details see \cite{GM}.
\end{rmk}

The next theorem is a special case of \cite{V}, and \cite[Theorem 9.14 and Corollary 10.4]{GM}.

\begin{thm}
Let $G$ be a discrete countable group and let $X \subset [0,1]^G$ be a closed invariant set.
The system $(X,G)$ is HAE  iff the set $X$, considered as a subset
of $\ell_\infty(G)$, is norm separable.
If $X \subset \{0,1\}^G$ then it is HAE iff it is countable.
\end{thm}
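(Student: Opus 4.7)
The plan is to handle the two directions of the main equivalence separately and then derive the $\{0,1\}^G$ statement. The bridge between dynamics and Banach-space geometry is the identification of $Eq(X)$ with the set of continuity points of the identity map $\iota:(X,d)\to(X,\|\cdot\|_\infty)$, where $d$ is a standard product metric on $[0,1]^G$ (e.g.\ $d(x,y)=\sum_{n\ge 1}2^{-n}|x(g_n)-y(g_n)|$ for an enumeration $(g_n)_{n\ge 1}$ of $G$ with $g_1=e$). Since $G$ acts by norm isometries and the norm topology refines the product topology, norm continuity of $\iota$ at $x$ gives uniform $d$-control of all translates; conversely, the inequality $|x(h)-x'(h)|\le 2\,d(hx,hx')$ shows that equicontinuity at $x$ forces $d$-closeness to imply norm-closeness.

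For the direction \emph{norm separable $\Rightarrow$ HAE}, I would fix any subsystem $Y\subset X$ and $\epsilon>0$, take a norm-dense sequence $\{y_n\}\subset Y$, and cover $Y=\bigcup_n B_n$ with $B_n=Y\cap\{z:\|z-y_n\|_\infty\le\epsilon\}$. Each $B_n$ is $d$-closed in $Y$, since a norm ball is the product-closed intersection $\bigcap_{g\in G}\{z:|z(g)-y_n(g)|\le\epsilon\}$. Applying the Baire category theorem inside any $d$-open $W\subset Y$ (such a $W$ is locally compact Polish) produces a relatively $d$-open subset of $W$ of norm-diameter at most $2\epsilon$. Hence the oscillation of $\iota|_Y$ is strictly less than $2\epsilon$ on a $d$-dense set; intersecting over $\epsilon=1/k$ exhibits $Eq(Y)$ as a dense $G_\delta$.

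For the direction \emph{HAE $\Rightarrow$ norm separable}, I would use transfinite peeling. Fix $\epsilon>0$. For each $d$-closed $G$-invariant $Y\subset X$ let $A_\epsilon(Y)$ be the union of all relatively $d$-open subsets of $Y$ of norm-diameter $\le\epsilon$; this is $d$-open in $Y$ and $G$-invariant, because $G$ acts both by $d$-homeomorphisms and by norm isometries. Set $X_0=X$, $X_{\alpha+1}=X_\alpha\setminus A_\epsilon(X_\alpha)$, and intersect at limit ordinals. Each $X_\alpha$ is a subsystem, hence almost equicontinuous by HAE, so the equivalence above yields $Eq(X_\alpha)\subset A_\epsilon(X_\alpha)$ and thus $A_\epsilon(X_\alpha)$ is $d$-dense in $X_\alpha$. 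The strictly decreasing ordinal chain of closed subsets of the hereditarily Lindel\"of space $(X,d)$ must terminate at some countable ordinal $\alpha^*$ with $X_{\alpha^*}=\emptyset$. Each layer $X_\alpha\setminus X_{\alpha+1}=A_\epsilon(X_\alpha)$ is covered by countably many relatively $d$-open sets of norm-diameter $\le\epsilon$ by second countability; choosing one representative from each yields a countable $\epsilon$-norm-dense subset of $X$, and unioning over $\epsilon=1/k$ gives norm separability.

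The $\{0,1\}^G$ statement is then immediate: any two distinct points of $\{0,1\}^G$ have $\|\cdot\|_\infty$-distance exactly $1$, so for $X\subset\{0,1\}^G$ norm separability is equivalent to countability, and the main equivalence concludes. I expect the main obstacle to be the \emph{HAE $\Rightarrow$ norm separable} direction: almost equicontinuity of a subsystem $Y$ supplies $d$-open sets of small norm-diameter only relative to $Y$, not within the ambient $X$, so a single Baire step on $X$ is insufficient. This is precisely where the hereditary hypothesis enters, to keep feeding the transfinite induction with almost equicontinuity at every stage.
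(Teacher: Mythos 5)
Your proof is correct, but it cannot take ``essentially the same approach as the paper'' for the simple reason that the paper offers no proof at all: the theorem is stated as a special case of Veech \cite{V} and of \cite[Theorem 9.14 and Corollary 10.4]{GM}, where it is extracted from the general machinery of fragmented families and representations of HNS systems on Asplund Banach spaces. What you have written is a self-contained, elementary substitute for that citation, and it is sound: the bridge lemma identifying $Eq(X)$ with the continuity points of $\iota\colon (X,d)\to (X,\|\cdot\|_\infty)$ works exactly as you say (the inequality $|x(h)-x'(h)|\le 2\,d(hx,hx')$ uses $g_1=e$ and the convention $(hx)(e)=x(h)$, and the converse uses that translations are norm isometries and that $d\le\|\cdot\|_\infty$); the Baire-category covering argument gives norm separable $\Rightarrow$ HAE, since each $B_n$ is product-closed and the small-oscillation sets are open dense, so $Eq(Y)$ is a dense $G_\delta$ for every subsystem $Y$; and the transfinite peeling gives the converse, with the two points you need — $G$-invariance of $A_\epsilon(X_\alpha)$ (homeomorphisms for $d$, isometries for the norm) and termination of the strictly decreasing closed chain at a countable ordinal by hereditary Lindel\"ofness — both correctly supplied. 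Your peeling argument is in fact the classical Namioka/Jayne--Namioka--Rogers scheme showing that hereditary fragmentability, $\sigma$-fragmentability and norm separability coincide in this setting, i.e.\ you have re-derived by hand the special case of the fragmentability theory that \cite{GM} invokes. The trade-off is clear: the citation buys the full characterization of HNS/HAE systems via Asplund representability, while your argument buys a short direct proof tailored to subshifts of $[0,1]^G$; the final statement for $X\subset\{0,1\}^G$ is immediate in both treatments since distinct points there are at sup-distance $1$, so norm separability degenerates to countability. You also correctly diagnose why a single Baire step cannot replace the induction in the HAE $\Rightarrow$ separable direction: almost equicontinuity of a subsystem controls oscillation only relative to that subsystem, which is precisely what the hereditary hypothesis feeds into each stage of the peeling.
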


Thus it follows that whenever the dynamical system $(X,G)$ (with $X \subset \{0,1\}^G$),
is not HAE (e.g. this is the case when it is weakly mixing or strongly proximal), 
the set $X \subset \ell_\infty(G)$  is not norm separable.

\begin{thm}
Let $G$ be a discrete countable non-amenable group. Then there is an infinite
closed invariant $X \subset \{0,1\}^G$ with the following properties:
\begin{enumerate}
\item
The system $(X,G)$ is minimal and strongly proximal (hence in particular not HAE). 
\item
The space $X$ considered as a subspace of $\Bcal(H)$ is not norm separable, and
\item
The weak operator closed, convex, $G$-invariant set
$$
Q := WO{\text{-}}\cls (\conv (X)) \subset \Bcal(H),
$$
is a strongly proximal affine $G$-system and therefore admits no fixed point.
\end{enumerate}
\end{thm}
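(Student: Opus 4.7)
The plan is to realize $X$ as a binary subshift modelling a nontrivial minimal strongly proximal action of $G$, and to derive the three properties from that representation. Since $G$ is countable and non-amenable, Furstenberg's criterion furnishes a nontrivial minimal strongly proximal $G$-flow $Y$ on some compact metric space. After passing, if necessary, to a totally disconnected factor of $Y$ (or directly to a suitable factor of the Furstenberg boundary of $G$), one obtains a nontrivial clopen set $A$ whose $G$-translates separate points, so that the map $y \mapsto (\ch_A(g^{-1}y))_{g \in G}$ realizes that factor as a subshift $X \subseteq \{0,1\}^G$ with $|X|>1$. Property (1) is then immediate by construction, and $X$ is necessarily infinite because any minimal strongly proximal action on a finite set factors through a finite quotient of $G$ and is therefore trivial. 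Property (2) follows from the theorem stated just above: strong proximality with $|X|>1$ implies that $(X,G)$ is not HAE, and hence $X$, viewed in $\ell_\infty(G)\subset \Bcal(H)$, is not norm-separable.

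For property (3), use the coincidence of the weak-operator and $w^*$-topologies on bounded subsets of $\Bcal(H)$ to identify $Q$ with the $w^*$-closure of $\conv X$ inside $\ell_\infty(G)$. The barycenter map
\[
\pi: M(X) \longrightarrow Q, \qquad \mu \longmapsto \Bigl(g \mapsto \int_X x(g)\,d\mu(x)\Bigr),
\]
is a continuous affine $G$-equivariant surjection. By Definition \ref{def-sp}, strong proximality of $(X,G)$ is proximality of $(M(X),G)$; in fact $(M(X),G)$ is itself strongly proximal as a dynamical system (given $\Lambda\in M(M(X))$, apply strong proximality of $(X,G)$ to the barycenter $\mu_\Lambda\in M(X)$ to find $g_i$ with $g_i\mu_\Lambda\to \del_{x_0}$; since $\del_{x_0}$ is an extreme point of $M(X)$, any cluster point of $g_i\Lambda$ in $M(M(X))$ must be supported at $\del_{x_0}$, so along a subnet $g_i\Lambda\to \del_{\del_{x_0}}$). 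Strong proximality passes to continuous $G$-equivariant factors, so $(Q,G)$ is a strongly proximal affine $G$-system.

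To rule out a $G$-fixed point, suppose $q_0 \in Q$ satisfies $gq_0=q_0$ for every $g \in G$. Then $\pi^{-1}(q_0)\subseteq M(X)$ is a nonempty closed convex $G$-invariant set. Pick any $\mu\in\pi^{-1}(q_0)$; by strong proximality of $(X,G)$ the orbit closure $\ol{G\mu}$ contains some Dirac $\del_{x_0}$, and by minimality of $(X,G)$ the whole set $\{\del_x : x \in X\}$ lies in $\ol{G\mu}\subseteq \pi^{-1}(q_0)$. Convexity and $w^*$-closedness, combined with $M(X)=\ol{\conv\{\del_x:x\in X\}}^{w^*}$, then force $\pi^{-1}(q_0)=M(X)$, so $\pi$ is constant and $Q=\{q_0\}$, contradicting $X\subseteq Q$ with $|X|>1$. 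The principal obstacle is the first step, the explicit realization of a nontrivial minimal strongly proximal $G$-flow as a subshift of $\{0,1\}^G$ for an arbitrary countable non-amenable $G$; the remaining arguments are standard manipulations with boundaries and with the factorization $\pi: M(X)\twoheadrightarrow Q$.
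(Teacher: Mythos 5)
Your verifications of properties (1)--(3), granted the realization of a nontrivial minimal strongly proximal system as a subshift of $\{0,1\}^G$, are correct and in fact more detailed than the paper's own treatment (which disposes of them with ``the rest of the claims follow from the above discussion''): the barycenter surjection $M(X)\twoheadrightarrow Q$, the upgrade of $(M(X),G)$ to a strongly proximal system via extremality of $\delta_{x_0}$ (Bauer's characterization), and the fiber argument forcing $\pi^{-1}(q_0)=M(X)$ are all sound, and give a valid alternative to the standard ``a proximal system has at most one minimal set'' argument for the non-existence of a fixed point. Your use of the HAE/norm-separability theorem for (2) also matches the paper's intent.

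The genuine gap is exactly where you flagged it, and your proposed repair would fail. A nontrivial minimal strongly proximal \emph{metric} flow $Y$ furnished by Furstenberg's criterion may be connected --- e.g.\ the boundary circle of a cocompact Fuchsian group --- and every factor of a connected compact space is connected, so ``passing to a totally disconnected factor of $Y$'' can only produce a point in such cases; no clopen set $A$ can be extracted along that route. The paper goes in the \emph{opposite} direction: rather than a factor of some metric boundary, it takes the universal minimal strongly proximal flow $\Pi_s(G)$, an extension of every $G$-boundary, and invokes the nontrivial fact --- the precise ingredient your proposal lacks --- that for a discrete group $\Pi_s(G)$ is extremally (in particular totally) disconnected, citing \cite{GTWZ}. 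One then chooses a nonempty proper clopen $O \subsetneq \Pi_s(G)$ and defines the name map $\pi(z)(g)=\ch_O(gz)$; its image $X\subset\{0,1\}^G$ is a minimal strongly proximal factor containing at least two points (distinguished at the coordinate $g=e$), hence infinite since a finite proximal system is trivial --- your amenable-quotient argument for infiniteness also works. Your parenthetical ``suitable factor of the Furstenberg boundary'' gestures at this construction, but without the extremal disconnectedness of $\Pi_s(G)$ the existence of the clopen set, and hence of the binary coding, is unproved; as written, the theorem's existence claim remains open in your proposal.
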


\begin{proof}
It is well known that $G$ is non-amenable iff there is an infinite minimal strongly proximal 
$G$-system, \cite[Chapter III, Theorem 3.1]{Gl-prox}. Since $G$ is discrete, 
general well known considerations show that such a system can be found with $X \subset \{0,1\}^G$.
There are several ways to see this; one short proof is as follows.
Consider the universal minimal strongly proximal $G$-system $(\Pi_s(G),G)$. Since $G$ is discrete
the space $\Pi(G)$ is totally disconnected (in fact an extremely disconnected space, see e.g. \cite{GTWZ}).
Now let $O \subsetneq \Pi_s(G)$ be a nonempty clopen subset and let
$\pi : \Pi_s(G) \to \{0,1\}^G$ be the corresponding ``name map":
$\pi(z)(g) = \ch_O(gz),\ z \in \Pi_s(G), \ g \in G$.
The image system $(X,G)$, where $X = \pi((\Pi_s(G))$, is the required system.
The rest of the claims follow from the above discussion (see Example \ref{exa-DM} below for
a concrete example of such a system for $F_2$).
\end{proof}

\br

\begin{question}
Which weak operator topology closed convex subsets of $\Bcal(H)$ are norm separable ?
\end{question}

\br

\begin{exa}\label{exa-DM}
The free group $G = F_2$ on two generators, say $a$ and $b$, is a hyperbolic group and its 
boundary (the boundary of its Cayley graph) 
can be identified with the compact metric space $\Om$ (a Cantor set) of all
the one-sided infinite reduced words $\om$ on the symbols $a, b, a^{-1}, b^{-1}$.
The group action on $\Om$ is given by
$$
F_2 \times \Om \to \Om,   \quad (g, \om) = g \cdot \om,
$$
where $g \cdot \om$ is obtained by concatenation of $g$ (written in its reduced form)
and $\om$ and then performing the needed cancelations.
The resulting dynamical system is minimal and strongly proximal. 
Taking any nontrivial clopen partition $\Om = P_0 \sqcup P_1$,
and defining the corresponding {\em name map}
$$
\phi \colon \Om \to \{0,1\}^G, \quad \phi(\om)(g) = \ep \ {\text {when}} \ g \cdot \om \in P_\ep,
$$
we obtain an infinite subshift $X = \phi(\Om) \subset \{0,1\}^G$ which is
minimal and strongly proximal.

It turns out that the systems $(\Om,G)$ and $(X,G)$ are {\em tame} (see 
Section \ref{sec-CE} below and \cite[Example 6.7]{GM-T}).
\end{exa}

\br

\section{A fixed point theorem for measures}\label{sec-measures}

In this section we provide a direct proof of the assertion of Question \ref{conj-K} for the case $V = C(X)$,
the Banach space of continuous real valued functions on a compact metric space $X$
equipped with the sup norm, and actions on $C(X)$ coming from actions by homeomorphisms on $X$ 
(see Theorem \ref{thm-BGMc} above and the remark which follows it).

 \br
 
Let $(X,G)$ be a compact metric dynamical system. 
%Recall that every linear and isometric action of a discrete group $G$ on the commutative, 
%separable, $C^*$-algebra $C(X)$, arises via such dynamical system.
Let $M(X) \subset C(X)^*$ denote the 
convex $w^*$-compact set of Borel probability measures on $X$.
We use $\|\cdot\|$  to denote both the sup norm on $C(X)$ and the total variation norm on 
its dual $C(X)^*$ (viewed as the space of signed measures).

\br

We will use the following well known lemma.

\begin{lem}\label{affi}
For $\al, \beta \in M(X)$ we have the identity
$$
\frac12 \| \al - \beta \| + \| \al \wedge \beta \| = 1.
$$
Thus $\al \perp \beta$ iff $\| \al - \beta\|=2$.
\end{lem}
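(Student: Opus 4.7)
The plan is to reduce the identity to the Hahn--Jordan decomposition of the signed measure $\mu := \al - \beta$. This decomposition yields a Borel partition $X = P \sqcup N$ together with mutually singular positive measures $\mu^+, \mu^-$ supported on $P$ and $N$ respectively, such that $\mu = \mu^+ - \mu^-$ and the total variation norm is $\|\mu\| = \mu^+(X) + \mu^-(X)$. Since $\al(X) = \beta(X) = 1$, we have $\mu(X) = 0$, which forces
$$
\mu^+(X) = \mu^-(X) = \tfrac12 \|\al - \beta\|.
$$

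Next I would invoke the standard description of the pointwise minimum of two positive measures in terms of the Hahn decomposition: for every Borel set $E$,
$$
(\al \wedge \beta)(E) = \al(E \cap N) + \beta(E \cap P),
$$
which holds because $\beta \le \al$ on $P$ while $\al \le \beta$ on $N$. Taking $E = X$ and using $\beta(P) = 1 - \beta(N)$ gives
$$
\|\al \wedge \beta\| = \al(N) + \beta(P) = 1 - (\beta - \al)(N) = 1 - \mu^-(X) = 1 - \tfrac12 \|\al - \beta\|,
$$
which rearranges to the claimed identity.

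The second statement follows at once: $\al \perp \beta$ is equivalent to $\al \wedge \beta = 0$, which by the identity just proved is equivalent to $\|\al - \beta\| = 2$. I do not foresee a real obstacle here; the argument is a routine consequence of Jordan decomposition, and the only point requiring mild care is the correct identification of $\al \wedge \beta$ on the two pieces of the Hahn decomposition.
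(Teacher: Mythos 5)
Your proof is correct: the Hahn--Jordan decomposition of $\mu = \al - \beta$, the observation that $\mu(X)=0$ forces $\mu^+(X)=\mu^-(X)=\tfrac12\|\al-\beta\|$, and the identification $(\al\wedge\beta)(E)=\al(E\cap N)+\beta(E\cap P)$ are all accurate, and the singularity criterion $\al\perp\beta \iff \al\wedge\beta=0$ is the standard lattice fact. The paper itself states this lemma without proof, labelling it ``well known,'' so there is nothing to compare against; your argument is precisely the standard one the paper is implicitly invoking, and it fills the gap correctly.
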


%\br
%
%\begin{lem}\label{limit}
%Suppose $\mu$ is a probability measure on $X$ and
%$\al_n \in M(X)$ is a sequence of probability measures  converging $w^*$ to $\al \in M(X)$.
%Suppose further that for each $n$, $\al_n \ll \mu$. Then 
%$$
%\left\|\frac{d(\al_n \wedge \mu)}{d\mu} - \frac{d(\al \wedge \mu)}{d\mu}\right\|_1 \to 0.
%$$
%\end{lem}
%

\br

\begin{thm}\label{thm-M}
Suppose $\al \in M(X)$ is
such that, for a positive $ \del < 1 $,
$\| g\al - \al \| \le \frac{\del}{10}$ for all $g \in G$. Then there exists a 
non zero $G$-invariant measure
 $\mu_0$ such that $\|\al - \mu_0\| \le  \del$.
\end{thm}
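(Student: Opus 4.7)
Set $\varepsilon = \delta/10$. By Lemma \ref{affi}, $\|\alpha \wedge g\alpha\| = 1 - \tfrac{1}{2}\|\alpha - g\alpha\| \ge 1 - \varepsilon/2$ for every $g \in G$, and the orbit $A := \{g\alpha : g \in G\}$ is contained in the norm ball of radius $\varepsilon$ around $\alpha$. My plan is to produce $\mu_0$ as a Chebyshev-type centre of $A$ in $M(X)$. Consider the functional
\[
\Phi \colon M(X) \to [0,\infty), \qquad \Phi(\mu) := \sup_{g\in G}\|\mu - g\alpha\|.
\]
Being a supremum of the $w^*$-LSC convex functions $\mu\mapsto \|\mu-g\alpha\|$, $\Phi$ is convex and $w^*$-lower semicontinuous, and $\Phi(\alpha) \le \varepsilon$. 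The sub-level sets $\{\Phi \le r\}$ are norm-bounded (by $r+1$) and $w^*$-closed, hence $w^*$-compact by Banach--Alaoglu, so $r_0 := \inf_{M(X)} \Phi \le \varepsilon$ is attained on a nonempty $w^*$-compact convex set $C \subset M(X)$. Because every $h\in G$ acts as a linear isometry on $M(X)$, one has $\Phi(h\mu) = \sup_g\|\mu - h^{-1}g\alpha\| = \Phi(\mu)$, so $h(C)=C$ and $C$ is $G$-invariant.

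Any $\mu_0 \in C$ satisfies $\|\mu_0-\alpha\| \le \Phi(\mu_0) = r_0 \le \varepsilon \le \delta$ automatically, so $\|\mu_0\| \ge 1 - \delta > 0$; hence once we find a $G$-fixed $\mu_0 \in C$ we are done. The heart of the matter is thus extracting a $G$-fixed point from $C$. Here the proof has to exploit the special structure of $M(X) = C(X)^*$: being the predual of the commutative von Neumann algebra $C(X)^{**}$, it is $L$-embedded, and its bidual decomposes as $M(X)^{**} = M(X) \oplus_1 N$ with a canonical norm-one, $G$-equivariant projection onto the first summand. Concretely one may take any $w^*$-cluster point $\tilde\mu$ of $A$ in $M(X)^{**}$ and let $\mu_0$ be its projection onto $M(X)$; the $L$-embedded machinery (this is essentially the content of Theorem \ref{thm-BGMc}) shows that the Chebyshev centre of a bounded set in an $L$-embedded space is \emph{unique}, which together with the $G$-invariance of $C$ forces $C = \{\mu_0\}$ and $\mu_0$ to be $G$-fixed.

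The main obstacle, and the thing any self-contained argument must face, is precisely this uniqueness statement for the Chebyshev centre in $M(X)$. It cannot be inferred by iterating Lemma \ref{affi} naively: for a finite $F\subset G$ one only gets $\|\bigwedge_{g\in F}g\alpha\| \ge 1 - |F|\varepsilon/2$, a bound that degrades with $|F|$, and indeed the lattice infimum $\bigwedge_{g\in G}g\alpha$ can be much too small in concrete examples (e.g.\ $G = S_n$ acting on $n$ points with $\alpha$ nearly uniform). An elementary alternative is to construct $\mu_0$ as an appropriate $G$-invariant lattice combination of $\{g\alpha\}$, using Lemma \ref{affi} to show that the ``excess'' positive parts $(\alpha - g\alpha)^+$, although individually of mass only $\le \varepsilon/2$, overlap heavily enough once one passes to all of $G$ that the total loss stays $\le \delta$; this heavy-overlap phenomenon is exactly the lattice-theoretic content of the $L$-embeddedness of $M(X)$, and is where Lemma \ref{affi} earns its keep.
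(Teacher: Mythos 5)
Your reduction to a Chebyshev-centre problem is set up correctly: $\Phi$ is convex and $w^*$-lower semicontinuous, its minimum over the $w^*$-compact set $M(X)$ is attained, and the centre set $C$ is nonempty, $w^*$-compact, convex and $G$-invariant. But the step on which everything hinges --- that $L$-embeddedness forces the Chebyshev centre of a bounded set to be \emph{unique} --- is false, and no such statement appears in \cite{BGM}. Already in $\ell_1^4 = L_1(\{1,2,3,4\})$ (an $L$-embedded space; the same example sits inside $L_1[0,1]$, and even inside $M(X)$ for a four-point $X$), the set $A = \{(\tfrac12,\tfrac12,0,0),\,(0,0,\tfrac12,\tfrac12)\}$ has Chebyshev radius $1$, and both probability vectors $(\tfrac14,\tfrac14,\tfrac14,\tfrac14)$ and $(\tfrac12,0,\tfrac12,0)$ are centres. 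What Theorem \ref{thm-BGM} actually provides is different and weaker: writing $V^{**} = V \oplus_1 V_0$, one shows that any $\tilde v = v + v_0$ in the intersection of the $w^*$-closures of the approximate-centre sets satisfies $\sup_{a \in A}\|v - a\| + \|v_0\| \le r_0$, forcing $v_0 = 0$; hence the centre set is $w^*$-closed in $V^{**}$, i.e.\ \emph{weakly compact} in $V$, and only then is the genuine (weak-compactness) Ryll--Nardzewski theorem applied to it. This passage from $w^*$-compactness to weak compactness is exactly what your argument is missing, and it cannot be bypassed: applying a fixed-point theorem directly to the merely $w^*$-compact $C$ is illegitimate, since Section \ref{sec-examples} of this very paper exhibits $w^*$-compact convex sets with isometric actions and no fixed point (which is why Theorem \ref{RN} requires norm separability). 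Your fallback recipes do not close the gap either: a $w^*$-cluster point $\tilde\mu$ of the orbit $A$ in $M(X)^{**}$ is not $G$-fixed, and the (indeed equivariant) projection onto $M(X)$ does not make it so; and your final paragraph about the ``heavy overlap'' of the parts $(\al - g\al)^+$ is an unexecuted sketch, not a proof.

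Two further remarks. First, even once repaired by quoting Theorem \ref{thm-BGM} verbatim, your route merely re-derives Theorem \ref{thm-BGMc}, which the paper records precisely in order to contrast it with Section \ref{sec-measures}: the stated purpose of Theorem \ref{thm-M} is a \emph{new} proof avoiding the $L$-embedded machinery. The paper's argument is genuinely different: it fixes a symmetric, fully supported $m \in M(G)$, produces an $m$-stationary measure $\la$ on $Q := w^*{\text{-}}\cls(\conv\{g\al : g \in G\})$ with barycentre $\mu$, Lebesgue-decomposes each $\theta \in Q$ as $\theta = \theta_a + \theta_d$ relative to $\mu$ (here Lemma \ref{affi} gives $\|\theta_a\| \ge 1 - \del/2$, whence $\mu_0 \neq 0$), transports $\theta_a \mapsto \sqrt{d\theta_a/d\mu}$ equivariantly into $L_2(X,\mu)$ under a unitary representation, and invokes stiffness of WAP systems \cite[Theorem 7.4]{F-G} to upgrade stationarity of $\la_a$ to $G$-invariance, setting $\mu_0 = \bary(\la_a)$. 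Second, note a mismatch of domains in your set-up: Theorem \ref{thm-BGM} minimises over all of $V = C(X)^*$, not over $M(X)$, so your restricted minimisation is not literally covered by it; and the paper's proof yields a \emph{positive} $\mu_0$ directly, which is what the proof of Theorem \ref{thm-CX} later uses, whereas on the BGM route one would have to pass to the positive part of the fixed signed measure, at the cost of a constant in the estimate.
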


\begin{proof}
Let $Q_{\del/2} = \{\beta \in C(X)^* : \|\al - \beta\| \leq \del/2\}$,
the ball of radius $\del/2$ centered at $\al$.
Let 
$$
Q : = w^*{\text{-}}\cls\left(\conv(\{g \al : g \in G\})\right) \subset M(X) \cap Q_{\del/2}.
$$
Note that $\|\theta - \eta\| \leq \del$ for any pair $\theta, \eta \in Q$.
Clearly $Q$ is a nonempty, $w^*$-closed, convex, $G$-invariant subset of $M(X)$.
%Let $Z = w^*{\text{-}}\cls (\ext Q)$, so that 
%$Q = w^*{\text{-}}\cls (\conv(Z))$.
%

%We can assume that $Q$ is irreducible; i.e. $Q$ is minimal with these properties.
%The irreducibility of $Q $ implies that $Z$ is a 
%minimal and strongly proximal subsystem of $M(X)$.

Let $m$ be a symmetric probability measure on $G$ with full support.
%, and
%let $\mu \in Q$ be an $m$-stationary measure; i.e. it satisfies the equation $m*\mu = \mu$.
%Clearly then $\mu \cong g\mu$ for every $g \in G$.
%-----------------------------------------------------------------
Let $P_m : M(X) \to M(X)$ be the corresponding Markov operator, defined by convolution with $m$:
$$
P_m(\theta) = m*\theta, \quad  \theta \in M(X).
$$
Clearly $P_m(Q) \subset Q$.
Let $\la \in M(M(X))$ be any weak$^*$ limit point of the sequence
$\frac1N \sum _{j =0}^{N-1}P_m^j \del_{\al}$,
%(i.e. $\al$ is a quasi-generic point for $\la$),
say
$$
\la = \lim_{k \to \infty}\frac{1}{N_k} \sum _{j =0}^{N_k-1}P_m^j \del_{\al}.
$$
It follows that $\la \in M(Q)$ and that
the measure $\la$ is $m$-stationary; i.e. $m*\la = \la$.
Let $\bary \colon M(M(X)) \to M(X)$ denote the weak$^*$-continuous, affine, barycenter map
and set 
$$
 \bary(\la) = \int_Q \theta d\la(\theta) := \mu \in Q.
$$
The map $\bary$ is an affine homomorphism of dynamical systems and we have $m*\mu =\mu$.
Thus the non-singular dynamical systems 
$(Q, \la; G, m)$ and $(X, \mu; G, m)$ are $m$-stationary systems (see \cite{F-G}).

%
%-------------------------------------------------------------------
%
%
%
%By Choquet's theorem 
%$$
%\bary(M(Z)) =Q
%$$ 
%and in particular,
%the set $\Lambda$ of probability measures
%$\la$ on $Z$ whose barycenter is $\mu$,
%\begin{equation*}\label{Choquet}
%\bary(\la) = \int z d\la(z) = \mu,
%\end{equation*}
%is non empty and it is also convex, $w^*$ closed and $m$-invariant
%(meaning that $m *\la \in \La$ for every $\la \in \La$).
%It follows that there is an element $\la \in \La$ which is $m$-stationary 
%i.e. satisfies $m * \la = \la$.
%We fix such a measure $\la$, so that the non-singular dynamical system
%$(Z, \la; G, m)$ is $m$-stationary (see \cite{F-G}).
%

\br

For each $\theta \in Q$ we let
$$
\theta = \theta_a + \theta_d, \qquad \theta_a \ll \mu, \quad  \theta_d \perp \mu,
$$
be the unique Lebesgue decomposition of $\theta$ with respect to $\mu$.

By Lemma \ref{affi}, for every $\theta \in Q$,
$$
0 < 1 - \frac12 \del \leq 1 - \frac12 \|\theta - \mu \| 
= \| \theta \wedge \mu\| = \int \min(\ch_X, \frac{d\theta_a}{d\mu}) d\mu
\leq \int \frac{d\theta_a}{d\mu} d\mu = \|\theta_a\|.
$$

Now for $g \in G$ we have
$$
g\theta = g\theta_a + g\theta_d = (g\theta)_a + (g\theta)_d,
$$
and
$$
g\theta_a \ll g \mu \cong \mu, \qquad \ g\theta_d \perp g\mu \cong \mu.
$$
Hence, by the uniqueness of the decomposition, we have
\begin{equation}\label{intertwine}
g\theta_a = (g\theta)_a, \qquad g\theta_d = (g\theta)_d.
\end{equation}

We write
$$
F_\theta = \frac{d\theta_a}{d\mu} \quad {\text{and}} \quad 
H_\theta = \sqrt{F_\theta}.
$$
Note that $F_\mu = H_\mu = \ch_X$.

We consider the usual unitary representation of $G$ on
$L_2(X,\mu)$ given by
$$
U_gf(x)=f(g^{-1}x)u(g^{-1},x),\qquad \text{\rm with}\qquad
u(g,x)=\sqrt{\frac{dg^{-1}\mu}{d\mu}}.
$$

Then, for $g\in G, \ \theta \in Q$ and $f \in L_2(X,\mu)$,
denoting $v(g,x)=\frac{dg^{-1}\mu}{d\mu}$,
we get
\begin{align*}
\int f(x) d(g\theta_{a}) & = \int f(x) F_{g\theta}(x) d\mu\\
&= \int  f(x)dg\theta_a\\
&=\int f(gx)F_\theta(x)d \mu\\
&= \int f(x)F_\theta(g^{-1}x)dg\mu\\
&=\int f(x)F_\theta (g^{-1}x)v(g^{-1},x)d\mu.
\end{align*}
Hence  $F_{g\theta}=(F_\theta\circ g^{-1})\cdot v(g^{-1},\cdot)$ and
\begin{equation}\label{U}
H_{g\theta}=(H_\theta\circ g^{-1})\cdot u(g^{-1},\cdot)=U_g H_\theta.
\end{equation}
Let 
$\phi : Q \to  C(X)^*$ be defined by $\phi(\theta) = \theta_a = F_\theta d\mu$.
 Let
 $$
 Q_a := \phi(Q), 
 %\quad  Z_a := \phi(Z),
\quad \la_a := \phi_*(\la).
 $$
 By equation (\ref{intertwine}), $\phi$ intertwines the actions of $G$ on $Q$ and $Q_a$.
%$Z_a = \phi(Z) \subset Q_0$, and 
%$\la_a = \phi_*(\la)$.

Let $J : Q_a \to L_2(X,\mu)$ be defined by $J(\theta_a) = H_\theta$,
and set
$$
\tilde{Q} := J(Q_a), 
%\quad \tilde{Z} : = J(Z_a),
\quad \tilde{\mu} := J_*(\la_a).
$$

Clearly  the map $J : (Q_a, G) \to (\tilde{Q}, G)$ is a measurable isomorphism 
and by equation (\ref{U}) 
$$
J(g\theta_a) = H_{g \theta} = U_g H_\theta = U_g J(\theta_a).
$$ 
Thus the map
$$
J 
%\rest Z_a 
\colon (Q_a, \la_a; G, m) \to (\tilde{Q}, \tilde{\mu}; G, m)
$$
is an isomorphism of $m$-stationary systems,
where again the action of $G$ on $\tilde{Q} \subset L_2(X,\mu)$ is via the unitary representation
 $g\mapsto U_g$ .

Now the dynamical system $(\tilde{Q}, \tilde{\mu}; G,m)$ is WAP (weakly almost periodic) and $m$-stationary.
By \cite[Theorem 7.4]{F-G}, such a system is {\em $m$-stiff};  that is,
every $m$-stationary measure is actually invariant.
We therefore conclude that the $m$-stationary measure $\tilde{\mu}$ is $G$-invariant.
%
%Denoting $F_0 = {{\rm barycenter}} (\tilde{\mu}) \in L_2(X, \mu)$
%we conclude that the function $F_0$ is $G$-invariant and therefore a nonzero constant.
%
%... ???

As the map $J$ is an isomorphism, this implies that also $\la_a$  is $G$-invariant.
Finally, applying the barycenter map
$$
\bary \colon M(Q_a) \to Q_{\del/2}
$$
and denoting $\mu_0 := \bary(\la_a) \in Q_{\del/2}$,
we conclude that $\mu_0$ is $G$-invariant 
%As $\mu_0 \in Q_a \subset Q_{\del/2}$,
with
$$
\| \al - \mu_0 \| \leq \del,
$$
and our proof is complete.
\end{proof}

\br

\begin{rmk}
Note that it may happen that we start with $\mu = \mu_0 + \theta$, where $\theta \perp \mu_0$,
$\|\theta\| < \del$, $\mu_0$ is $G$-invariant, and $\theta$ is
such that $w^*{\text{-}}\cls(G \theta)$ admits an $m$-stationary measure
which is not $G$-invariant.
\end{rmk}

\begin{rmk}
Normalizing $\mu_0$ we obtain the existence of a $G$-invariant probability measure.
%We note the apparent similarity between Theorem \ref{thm-M} and Proposition 4.14.(1) in \cite{BBHP}. 
\end{rmk}

A slight modification of the proof will yield the following.

\begin{thm}\label{thm-prob}
If $\al \in M(X)$ is such that for some $\del >0$,
%$g\al \not \perp \al$ (i.e. $\| g\al - \al \| < 2$) 
$\| g\al - \al \| \leq 2 - \del$ for all $g \in G$, then there exists a 
$G$-invariant probability measure. 
\end{thm}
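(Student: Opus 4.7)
The plan is to adapt the proof of Theorem \ref{thm-M} nearly verbatim, relaxing the target from ``a $G$-invariant measure $\delta$-close to $\alpha$'' to ``a nonzero $G$-invariant measure''; the only real change is in the quantitative estimate on the absolutely continuous parts.

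Set $Q := w^*\text{-}\cls\bigl(\conv(\{g\alpha : g \in G\})\bigr) \subset M(X)$, which is nonempty, convex, weak$^*$-compact and $G$-invariant. Since $G$ acts by isometries, for any $g, h \in G$ we have $\|g\alpha - h\alpha\| = \|\alpha - g^{-1}h\alpha\| \leq 2 - \delta$, and by convexity this bound extends to $\conv(G\alpha)$; combined with the weak$^*$-lower semicontinuity of the total variation norm, one obtains $\|\theta - \eta\| \leq 2 - \delta$ for every $\theta, \eta \in Q$. Choose a symmetric probability measure $m$ on $G$ with full support, form the Markov operator $P_m$ on $M(X)$, and let $\lambda \in M(Q)$ be a weak$^*$ limit point of $\frac{1}{N}\sum_{j=0}^{N-1} P_m^j \delta_\alpha$; then $\lambda$ is $m$-stationary and $\mu := \bary(\lambda) \in Q$ is an $m$-stationary element of $M(X)$.

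The key estimate comes from Lemma \ref{affi}: writing the Lebesgue decomposition $\theta = \theta_a + \theta_d$ of each $\theta \in Q$ with respect to $\mu$, we get
$$
\|\theta_a\| \;\geq\; \|\theta \wedge \mu\| \;=\; 1 - \tfrac{1}{2}\|\theta - \mu\| \;\geq\; \tfrac{\delta}{2} \;>\; 0.
$$
So the absolutely continuous components are uniformly bounded below in norm, which is the only place where the quantitative hypothesis enters. With this in hand, the remainder of the proof of Theorem \ref{thm-M} transfers without alteration: the intertwining map $\phi(\theta) = \theta_a$, the square-root map $J(\theta_a) = \sqrt{d\theta_a/d\mu}$ into $L_2(X,\mu)$, and the identity $J(g\theta_a) = U_g J(\theta_a)$ produce an $m$-stationary subsystem $(\tilde Q, \tilde\mu; G, m)$ of the WAP unitary $G$-system on $L_2(X,\mu)$. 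By the stiffness theorem for WAP systems (\cite[Theorem 7.4]{F-G}), $\tilde\mu$ is $G$-invariant; pulling back through $J$ and $\phi$ makes $\lambda_a$ $G$-invariant, and then $\mu_0 := \bary(\lambda_a)$ is a nonzero $G$-invariant positive measure with $\|\mu_0\| \geq \delta/2$. Normalizing $\mu_0/\|\mu_0\|$ yields a $G$-invariant probability measure, as required.

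The only technical step that needs extra care is the extension of the bound $\|\theta - \eta\| \leq 2 - \delta$ from $\conv(G\alpha)$ to its weak$^*$-closure $Q$; this is the main obstacle, but it is routinely handled by the weak$^*$-lower semicontinuity of the total variation norm (equivalently, by realizing the norm as a supremum over continuous functions on $X$ of sup-norm one). Everything after that uses exactly the same machinery as in the proof of Theorem \ref{thm-M}.
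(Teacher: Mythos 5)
Your proposal is correct and is precisely the ``slight modification'' of the proof of Theorem \ref{thm-M} that the paper intends: the hypothesis $\|g\alpha-\alpha\|\leq 2-\delta$ gives, via Lemma \ref{affi}, the uniform lower bound $\|\theta_a\|\geq \delta/2>0$ on the absolutely continuous parts, after which the stationary measure $\lambda$, the maps $\phi$ and $J$, and the stiffness theorem apply verbatim, and normalizing the nonzero invariant positive measure $\mu_0$ yields an invariant probability measure. Your extension of the pairwise bound $\|\theta-\eta\|\leq 2-\delta$ from $\conv(G\alpha)$ to its weak$^*$ closure via weak$^*$ lower semicontinuity of the dual (total variation) norm is the correct and routine justification of the one point where the original argument's ball-containment reasoning does not directly transfer.
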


\begin{rmk}
%Normalizing $\mu_0$ we obtain the existence of a $G$-invariant probability measure.
Note the apparent similarity between Theorem \ref{thm-prob} and Proposition 4.14.(1) in \cite{BBHP}. 
\end{rmk}

%
%To see that Theorem \ref{thm-M}  confirms the conjecture for $C(X)$ just note that by Jordan's theorem
%every signed measure $\mu \in C(X)^*$ has the form $\mu = \mu^+ - \mu^-$, with
%$\mu^{\pm}$ positive finite measures, so that
%$\mu^\pm/\|\mu^\pm\| \in M(X)$.
%

\br

\begin{thm}\label{thm-CX}
Let $(X,G)$ be a compact $G$-space and suppose that $\mu \in C(X)^*$ is a signed measure
such that, for a positive $0 < \del < 1$,
$\| g\mu - \mu \| \le \frac{\del}{10}$ for all $g \in G$. Then there exists a $G$-invariant signed measure
 $\la$ such that $\|\mu - \la\| \le  2\del$.
 \end{thm}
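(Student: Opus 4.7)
The idea is to reduce to the probability case handled by Theorem \ref{thm-M} via the Jordan decomposition. Write $\mu = \mu^+ - \mu^-$ where $\mu^\pm$ are mutually singular positive measures. Since the $G$-action on $C(X)^*$ is induced by homeomorphisms of $X$, it preserves positivity and mutual singularity, so $g\mu^+$ and $g\mu^-$ are mutually singular positive measures whose difference is $g\mu$. By the uniqueness of the Jordan decomposition, $(g\mu)^\pm = g\mu^\pm$.

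The first step is the key estimate
\[
\|g\mu^\pm - \mu^\pm\| \le \|g\mu - \mu\| \le \tfrac{\del}{10}.
\]
This follows from the general inequality $\|\sig^+ - \tau^+\| \le \|\sig - \tau\|$ for signed measures $\sig,\tau$: writing both in densities with respect to $|\sig|+|\tau|$ gives $\||\sig|-|\tau|\| \le \|\sig-\tau\|$, and then the identity $\sig^+ - \tau^+ = \tfrac{1}{2}(\sig-\tau) + \tfrac{1}{2}(|\sig|-|\tau|)$ yields the claim. Applied with $\sig = g\mu$, $\tau = \mu$ and using $(g\mu)^\pm = g\mu^\pm$, one gets the estimate for the positive and negative parts.

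Next, treat each part separately by normalising. If $\|\mu^+\| > \del$, set $\al^+ = \mu^+/\|\mu^+\| \in M(X)$. Then $\|g\al^+ - \al^+\| \le \del/(10\|\mu^+\|)$, and $\del' := \del/\|\mu^+\| < 1$, so Theorem \ref{thm-M} supplies a non-zero $G$-invariant positive measure $\la^+_0$ with $\|\al^+ - \la^+_0\| \le \del'$. Rescaling, the $G$-invariant measure $\tilde\la^+ := \|\mu^+\|\la^+_0$ satisfies $\|\mu^+ - \tilde\la^+\| \le \del$. If instead $\|\mu^+\| \le \del$, take $\tilde\la^+ = 0$, which trivially gives $\|\mu^+ - \tilde\la^+\| \le \del$. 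Perform the symmetric construction for $\mu^-$ to obtain a $G$-invariant positive $\tilde\la^-$ with $\|\mu^- - \tilde\la^-\| \le \del$.

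Finally, set $\la := \tilde\la^+ - \tilde\la^-$, which is $G$-invariant as a difference of $G$-invariant measures, and apply the triangle inequality:
\[
\|\mu - \la\| \le \|\mu^+ - \tilde\la^+\| + \|\mu^- - \tilde\la^-\| \le 2\del,
\]
which is the desired bound. The only mild subtlety is the dichotomy on $\|\mu^\pm\|$, needed because the rescaling factor $1/\|\mu^\pm\|$ could otherwise make the hypothesis of Theorem \ref{thm-M} fail; this is resolved by approximating a very small part by the zero measure at no extra cost.
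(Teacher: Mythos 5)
Your proposal is correct and follows essentially the same route as the paper's own proof: Jordan decomposition, the estimate $\|g\mu^{\pm}-\mu^{\pm}\|\le \frac{\del}{10}$, normalization, an application of Theorem \ref{thm-M} to each part, and recombination via the triangle inequality to get $2\del$. You are in fact slightly more careful than the paper, which asserts the estimate on $\mu^{\pm}$ as ``not hard to see'' (your identity $\sig^{+}-\tau^{+}=\tfrac12(\sig-\tau)+\tfrac12(|\sig|-|\tau|)$ supplies the missing argument) and which normalizes silently, whereas your dichotomy on $\|\mu^{\pm}\|\le\del$ correctly handles the degenerate case $\mu^{\pm}=0$ and guarantees the hypothesis $\del/\|\mu^{\pm}\|<1$ required by Theorem \ref{thm-M}.
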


\begin{proof}
Let $\mu = \mu^+ - \mu^-$ be the Jordan decomposition of $\mu$.
It is not hard to see that then also $\| g\mu^\pm - \mu^\pm \| \le \frac{\del}{10}$ for all $g \in G$.
Denoting $\mu^\pm_1 = \frac{\mu^\pm}{\|\mu^\pm\|} \in M(X)$ we have
$$
\|g \mu^\pm_1 - \mu^\pm_1\| \le \frac{\del}{10\|\mu^\pm\|}
$$ 
and, by Theorem \ref{thm-M} there are $G$-invariant positive measures $\la^\pm_1$ with
$\|\la^\pm_1 - \mu^\pm_1\| \le \frac{\del}{\|\mu^\pm\|}$.
Finally, with $\la^\pm = \|\mu^\pm_1\|\la^\pm_1$ and $\la = \la^+ - \la^-$, we conclude that
$\la$ is a $G$-invariant signed measure satisfying
$$
\| \la - \mu \| \le 2\del.
$$
\end{proof}

\br

 \section{A counter example to the $\del$ question}\label{sec-CE}
% 
%\begin{conj}\label{conj-K}
% Let $\del >0$ be given and let $V$ be a Banach space, equipped with a linear and isometric action 
% of a discrete group $G$. Let $\xi \in V^*$ be such that  $\|\xi \| =1$ and 
% $\| g\xi - \xi \| \le \frac{\del}{10}$ for all $g \in G$. Then there exists a $G$-invariant
% $\eta \in V^*$ such that $\|\xi - \eta\| \le  \del$.
% \end{conj}
% 
% 

 In this section we will show how to construct a counter example to the assertion of Question \ref{conj-K}
 for $G =F_2$, the free group on two generators.
 In fact, the same construction will work for any discrete countable group that admits an effective
 minimal, strongly proximal, tame dynamical system.
 The idea is to start with such a minimal metric dynamical system $(X,G)$ and then,
 as in \cite{GM-rose}, by way of the Davis-Figiel-Johnson-Pelczy\'nski
 (DFJP)  construction \cite{DFJP}, to modify its 
 natural representation on $C(X)$ 
 in order to create a Rosenthal Banach space $V$ 
 and a representation of $(X,G)$ on $V^*$,
 so that in this representation the question \ref{conj-K} is refuted.
% \footnote{A Banach space is called {\em Rosenthal} if it does not contain an isomorphic copy
% of $l_1(\N)$}.
 For concreteness we will consider the $F_2$ dynamical system from Example \ref{exa-DM}.

We start with some background on enveloping semigroups, on tame dynamical systems
and on representations of dynamical systems on Banach spaces.
For simplicity we will assume that our dynamical systems are metrizable.
For more details see e.g. \cite{GM-rose}.

 \br
 
 \begin{defn}
The enveloping semigroup of the system $(X,G)$, denoted by $E(X)$,
 is defined as the pointwise closure in $X^X$ of the set of $g$-translations, $g \in G$.
 \end{defn} 
 
 \begin{defn} \label{d:Ros-F}\ 
 \begin{enumerate}
 \item
 A compact space $K$ is called {\em Rosenthal compact} if, for some Polish space $X$, it 
 can be homeomorphically embedded in the space $B_1(X)$ of real valued Baire class 1
 functions on $X$, endowed with the pointwise convergence topology.
 \item
Let $X$ be a compact topological space. We say that a subset $F\subset
C(X)$ is a \emph{Rosenthal family} (for $X$) if $F$ is norm bounded and
the pointwise closure $\cls_p(F)$ of $F$ in $\R^X$ consists of Baire class 1 functions.
\item
%asr
A dynamical system $(X,G)$ is called {\em tame} if 
%every bounded family $F \subset C(X)$
for every $f \in C(X)$ the orbit $\{f \circ g : g \in G\}$ is a 
Rosenthal family.
\end{enumerate}
\end{defn}

\br

The dynamical Bourgain-Fremlin-Talagrand dichotomy (see \cite{BFT}) marks a sharp division in the 
domain of dynamical systems into two classes:  ``tame" and ``wild". 
It was first introduced by K\"{o}hler \cite{Ko}, and then was further developed 
is a series of papers by Glasner and Megrelishvili, Kerr and Li, and many other authors
(see \cite{GM}, \cite{KL}, \cite{GM-R}). 
The following theorem is from \cite[theorem 3.2]{GM}.

\begin{thm}[The dynamical BFT dichotomy]\label{f:D-BFT}
Let $X$ be a compact metric dynamical $G$-system and let $E(X)$ be its
enveloping semigroup. We have the following dichotomy. Either
\begin{enumerate}
\item
$E(X)$ is a separable Rosenthal compactum, hence with cardinality 
${\card}{E(X)} \leq 2^{\aleph_0}$; \ or
\item
%160111  the compact space $E$
$E(X)$
contains a homeomorphic copy of $\beta\N$
(the Stone-\u{C}ech compactification of $\N$),
hence ${\card}{E(X)} = 2^{2^{\aleph_0}}$.
\end{enumerate}
The first case occurs if and only if the system $(X,G)$ is tame.
\end{thm}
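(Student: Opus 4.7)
The plan is to deduce the dichotomy from its classical, non-dynamical counterpart due to Bourgain--Fremlin--Talagrand, applied to each individual orbit in $C(X)$, and then to transfer the conclusion from $C(X)$ back to $E(X) \subset X^X$ via a canonical embedding. Fix a countable norm-dense set $\{f_n\}_{n \in \N} \subset C(X)$, which exists because $X$ is compact metric, and write $O_n := \{f_n \circ g : g \in G\}$ with $\overline{O_n}^{\,p}$ denoting its pointwise closure in $\R^X$. The map
\[
\Phi : E(X) \to \prod_{n \in \N} \overline{O_n}^{\,p}, \qquad p \mapsto (f_n \circ p)_{n \in \N},
\]
is continuous (pointwise convergence in $X^X$ yields pointwise convergence of $f_n \circ p$ because each $f_n$ is continuous) and injective (since $\{f_n\}$ separates points of $X$, the family $(f_n \circ p)_n$ determines $p$). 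As $E(X)$ is compact and the codomain Hausdorff, $\Phi$ is a topological embedding.

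Assume first that $(X,G)$ is tame; then each $\overline{O_n}^{\,p}$ consists of Baire-class-$1$ functions on the Polish space $X$, i.e.\ is a Rosenthal compactum. A countable product of Rosenthal compacta is Rosenthal compact (its points are identified with Baire-$1$ functions on the Polish coproduct $X \times \N$) and is separable whenever $G$ is countable, so $E(X)$, embedded as a closed subspace via $\Phi$, is itself a separable Rosenthal compactum; the bound $|E(X)| \le 2^{\aleph_0}$ is then the standard cardinality bound for such spaces. For the converse, suppose $(X,G)$ is not tame, so that some $f = f_{n_0}$ has $O_{n_0}$ not a Rosenthal family. By Rosenthal's $\ell^1$-theorem together with the classical BFT there is a sequence $(g_k) \subset G$ such that $(f \circ g_k)$ is equivalent to the unit vector basis of $\ell^1$, and its pointwise closure in $\R^X$ is homeomorphic to $\beta\N$ via $k \mapsto f \circ g_k$. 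Using the universal property of $\beta\N$, extend $k \mapsto g_k$ to a continuous map $\iota : \beta\N \to E(X)$; composing with the continuous evaluation $\mathrm{ev}_f : E(X) \to \overline{O_{n_0}}^{\,p}$, $p \mapsto f \circ p$, yields a continuous map $\mathrm{ev}_f \circ \iota : \beta\N \to \overline{\{f \circ g_k\}}^{\,p}$ which on $\N$ is the tautological bijection. By the uniqueness clause of the universal property it must coincide with the canonical homeomorphism supplied above; in particular it is injective, which forces $\iota$ to be injective, hence a topological embedding $\beta\N \hookrightarrow E(X)$, and the cardinality $2^{2^{\aleph_0}}$ follows from $|\beta\N| = 2^{2^{\aleph_0}}$.

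The main obstacle is the transfer step in the wild direction: the $\ell^1$-behaviour is detected in $C(X)$, one level removed from $E(X)$, and one must propagate a topological copy of $\beta\N$ from the orbit closure in $\R^X$ back to $E(X) \subset X^X$ rather than merely to a continuous quotient of it. This is handled by combining Rosenthal's $\ell^1$-theorem (to extract the basis), the classical BFT theorem (to identify the resulting pointwise closure with $\beta\N$), and the universal property of $\beta\N$ applied to $(g_k) \subset E(X)$, with injectivity of the Stone--\v{C}ech extension forced by its injectivity after one further continuous projection.
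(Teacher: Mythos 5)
The paper itself offers no proof of this theorem: it is quoted verbatim from \cite[Theorem 3.2]{GM}, so your attempt can only be measured against the standard argument. Your reconstruction follows essentially that route: coordinatize $E(X)\subset X^X$ through a countable separating family $\{f_n\}\subset C(X)$, apply the classical Bourgain--Fremlin--Talagrand dichotomy to each orbit $\{f_n\circ g : g\in G\}$, and in the wild case transport the copy of $\beta\N$ from the orbit closure in $\R^X$ back into $E(X)$. Your handling of the transfer step --- extending $k\mapsto g_k$ to $\iota:\beta\N\to E(X)$ by the universal property, noting that $\mathrm{ev}_f\circ\iota$ agrees on the dense set $\N$ with the canonical homeomorphism onto $\overline{\{f\circ g_k\}}^{\,p}$ and hence equals it, so that $\iota$ is a continuous injection from a compactum into a Hausdorff space and thus an embedding --- is exactly right, and you correctly identify this as the crux. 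Two smaller points in that direction deserve a line each: the passage from ``$(X,G)$ not tame'' to ``some $f_{n_0}$ \emph{in the chosen dense family} has a non-Rosenthal orbit'' uses that Baire class $1$ is closed under uniform limits (if every $f_n$ had a Rosenthal orbit, an $\ep$-approximation argument would force every $f\in C(X)$ to have one); and extracting a sequence $(g_k)$ at all uses the BFT main theorem in the form: if every sequence in the orbit had a pointwise convergent subsequence, the pointwise closure would already consist of Baire-$1$ functions.

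There is, however, one genuinely invalid inference, in the tame direction: you deduce separability of $E(X)$ from separability of the product $\prod_n \overline{O_n}^{\,p}$ on the grounds that $E(X)$ is ``embedded as a closed subspace via $\Phi$.'' Separability does not pass to closed subspaces of non-metrizable compacta --- $\beta\N$ is separable while its closed subspace $\beta\N\setminus\N$ is not --- so this step, as written, proves nothing. The conclusion is nevertheless immediate and needs neither $\Phi$ nor tameness: $E(X)$ is by definition the pointwise closure of the set of translations $\{g : g\in G\}$, which is countable in this paper's standing setting of a discrete countable group $G$ (and separable whenever $G$ is a separable acting group), so $E(X)$ is separable outright; the substance of case (1) is only the Rosenthal compactness and the bound $\card E(X)\le 2^{\aleph_0}$, which your embedding into $B_1(X\times\N)$ does deliver, since $|B_1(P)|\le 2^{\aleph_0}$ for Polish $P$. (For completeness, in case (2) the equality $\card E(X)=2^{2^{\aleph_0}}$ also uses the trivial upper bound $|X^X|\le 2^{2^{\aleph_0}}$, and mutual exclusivity of the two cases --- hence the ``if and only if'' clause --- follows from the cardinality gap.) With the separability step replaced as indicated and the two one-line lemmas made explicit, your proof is correct and coincides in strategy with the cited one.
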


\br

\begin{defn}
A Banach space $V$ is called {\em Rosenthal} if it does not contain an isomorphic copy of
$\ell_1(\N)$. For a separable Banach space $V$ an equivalent condition is that $\card V = \card V^{**}
= 2^{\aleph_0}$ (see e.g. \cite{OR} and \cite{Dulst}).
Every Asplund space is Rosenthal.
\end{defn}

\br

 Let $V$ be a Banach space. Denote by $\Iso(V)$ the topological
group of all linear isometries of $V$ onto itself equipped with the
pointwise convergence topology.

\begin{defn} \label{d:repr} \cite{Meg} Let $X$ be a $G$-space.
A \emph{representation} of $(X,G)$ on a Banach space
$V$ is a pair
$$
h  \colon G \to {\Iso}(V), \ \ \ \alpha \colon X \to V^*
$$
where
$h \colon  G \to {\Iso}(V)$ is a continuous co-homomorphism and
$\alpha \colon X \to V^*$ is a weak$^*$ continuous bounded
$G$-map with respect to the {\em dual action\/} $G \times V^* \to V^*,
\ (g\varphi)(v):=\varphi(h(g)(v))$.
We say that a representation $(h,\alpha)$ is
\emph{faithful} when $\alpha$ is a topological embedding.
\end{defn}

Every compact $G$-space $X$ admits a canonical faithful representation on the Banach space $V =C(X)$
via the map $x \mapsto \del_x \in C(X)^*$. 
A natural program is then to classify dynamical systems according to their representability 
properties on ``nice" Banach spaces.
In the following table we encapsulate some features of the
trinity:  a dynamical system $(X,G)$, its enveloping semigroup $E(X)$,
%\footnote{The enveloping semigroup of the system $(X,G)$ is defined as the pointwise closure in $X^X$ of the set of $G$-translations.}
and a class of Banach spaces on at least one of its members the dynamical system $(X,G)$
can be faithfully represented.

\br

Let $X$ be a compact metrizable $G$-space and
$E(X)$ denote the corresponding enveloping semigroup. The symbol
$f$ stands for an arbitrary function in $C(X)$ and
$fG = \{f \circ g: g\in G\}$ denotes its orbit.
Finally, $\cls(fG)$ is the pointwise closure of $fG$ in
$\R^X$.
For more details on this classification see e.g. the review \cite{GM-R}.
WAP stands for weakly almost periodic and HNS for hereditarily not sensitive.

%Dec  DC        \forall $f$        sequential

{\tiny{
\begin{table}[h]
\begin{center}
\begin{tabular}{ | l | l| l | l | l | l | }
\hline  & Dynamical characterization &  Enveloping semigroup &
Banach representation\\
\hline  WAP & $\cls(fG)$ is a subset of $C(X)$  &
Every element is continuous & Reflexive \\
\hline HNS & $\cls(fG)$ is metrizable & $E(X)$ is metrizable
& Asplund\\
\hline Tame &  $\cls(fG)$ is Fr\'echet &
Every element is Baire 1 & Rosenthal\\
\hline
\end{tabular}

\br
\caption{ \protect  The hierarchy of Banach representations}
\end{center}
\end{table}
}}

We have the following theorem (\cite[Theorems 6.3 and Theorem 6.9]{GM-rose})

\begin{thm} \label{t:general}
Let $X$ be a compact $G$-space, $F \subset C(X)$ a Rosenthal
family for $X$ such that
$F$ is $G$-invariant (that is, $f\circ g  \in F, \ \ \forall f  \in F, \ \forall g \in G$). 
\ben
\item
There exist: a Rosenthal Banach space $V$, an injective mapping
$\nu: F \to B_V$ into the unit ball $B_V$ of $V$ and a continuous representation
$$
h \colon G \to \Iso(V), \ \ \ \al \colon
X \to V^*
$$
of $(X, G)$ on $V$ 
%such that $\al$ is a weak$^*$ continuous map 
($\al$ is a topological embedding if $F$ separates points of $X$) and
$$
f(x)= \langle \nu(f), \al(x)
\rangle \ \ \ \forall \ f \in F \ \ \forall \ x \in X.
$$
Thus the following diagram commutes

\begin{equation*}
\xymatrix{ F \ar@<-2ex>[d]_{\nu} \times X
\ar@<2ex>[d]^{\al} \ar[r]  & \R \ar[d]^{id_{\R}} \\
V \times V^* \ar[r]  &  \R }
\end{equation*}

\item
%If $X$ is metrizable then in addition we can suppose that
%$V$ is separable and there exists a homeomorphic embedding
%$$\nu_0: K:=\cls_p(F) \hookrightarrow V^{**}$$ furnishing
%$V^{**}$ with its weak$^*$ topology
%and the following diagram commutes
%
%\begin{equation*}
%\xymatrix{ K \ar@<-2ex>[d]_{\nu_0} \times X
%\ar@<2ex>[d]^{\al} \ar[r]  & \R \ar[d]^{id_{\R}} \\
%V^{**} \times V^* \ar[r]  &  \R }
%\end{equation*}
If X is metrizable then $V$ is separable.
%, and 
%if $F$ separates points of X then this representation is faithful.  
%If $X$ is metrizable and $F$ separates points on $X$, then this representation is faithful
%and $V$ is separable.
\een
\end{thm}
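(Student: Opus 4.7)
The plan is to apply the Davis--Figiel--Johnson--Pelczy\'nski (DFJP) interpolation construction to the pair $(E,W)$, where $E = C(X)$ is equipped with the sup norm and $W$ is the norm-closed symmetric absolutely convex hull of $F$ inside $E$. Since $F$ is $G$-invariant under the isometric co-action $(g \cdot f)(x) = f(gx)$, the bounded absolutely convex set $W$ is also $G$-invariant. The DFJP construction then yields a Banach space $V$ with a continuous linear injection $j : V \hookrightarrow E$ whose unit ball $B_V$ contains $W$; recall that the norm on $V$ is $\|x\|_V^2 = \sum_{n \geq 1} \|x\|_n^2$, where $\|\cdot\|_n$ is the Minkowski functional of $U_n := 2^n W + 2^{-n} B_E$.

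For the isometric representation, I observe that every $g \in G$ acts isometrically on $E$ and preserves both $W$ and $B_E$, so it permutes each $U_n$, preserves every $\|\cdot\|_n$, and hence preserves $\|\cdot\|_V$; this defines the continuous co-homomorphism $h : G \to \Iso(V)$. I would then take $\nu : F \to B_V$ to be the tautological inclusion $F \subset W \subset B_V$, which is automatically injective since distinct elements of $F$ are distinct elements of $C(X)$. The map $\alpha : X \to V^*$ is defined as the composition
\[
\alpha : X \xrightarrow{x \mapsto \delta_x} E^* \xrightarrow{j^*} V^*.
\]
Then $\alpha$ is weak$^*$-continuous and $G$-equivariant for the dual action, and the identity
\[
\langle \nu(f), \alpha(x) \rangle = \delta_x(f) = f(x)
\]
holds for every $f \in F$ and $x \in X$. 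If $F$ separates the points of $X$, then $\alpha$ is injective and, by compactness of $X$, is a topological embedding into $(V^*, w^*)$.

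The main obstacle is to verify that $V$ is a Rosenthal Banach space. The argument would proceed in two steps. First, the Rosenthal-family hypothesis on $F$ means that its pointwise closure in $\R^X$ consists of Baire-$1$ functions; by Rosenthal's $\ell_1$-theorem no sequence in $F$ can be equivalent to the unit vector basis of $\ell_1(\N)$, and therefore the symmetric absolutely convex hull $W$ is weakly precompact in $E$. Second, one would invoke the strengthening of the DFJP theorem which guarantees that when the starting set $W$ is weakly precompact, the interpolation space $V$ contains no isomorphic copy of $\ell_1(\N)$; this is the essential technical input, established in the Rosenthal-variant DFJP literature and exploited in \cite{GM-rose}.

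Finally, for part (2), when $X$ is metrizable, $C(X)$ is norm-separable, so $W$ is norm-separable; applying the DFJP construction to a norm-separable $W$ inside a separable Banach space yields a separable $V$, by standard approximation through the iterated sets $2^n W + 2^{-n} B_E$.
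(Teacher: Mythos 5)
Your proposal is correct and is essentially the paper's own route: the paper establishes Theorem \ref{t:general} by exactly this DFJP interpolation from \cite{GM-rose}, applied to $W=\conv(F\cup -F)$ with $M_n=2^nW+2^{-n}B$, the isometric action $h$ coming from $G$-invariance of $W$ and $B$, the tautological inclusion $\nu$, the map $\al=j^*\circ\del$ with the same pairing and embedding argument, and separability of $V$ via its isometric diagonal embedding into the $\ell_2$-sum of the spaces $(C(X),\|\cdot\|_n)$ --- and, like you, it outsources the Rosenthal property of $V$ to \cite{GM-rose}. The one word to flag is your ``therefore'' passing from $\ell_1$-freeness of $F$ to weak precompactness of the hull $W$: stability of weak precompactness under (absolutely) convex hulls is a genuinely nontrivial step, and it is precisely what \cite{GM-rose} supplies by proving that $W$ is again a Rosenthal family, both for $X$ and for the dual ball $B^*$, before feeding it into the interpolation scheme.
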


\br

In order to see that this theorem can produce the required counterexample we will have to look 
into some details of these constructions, as follows.

%\begin{proof} \textbf{(1)}  \ 

\br

For brevity of notation let $\Acal := C(X)$ denote the Banach
space $C(X)$ where,
$\|\cdot\|$ will denote the sup-norm on $C(X)$, $B$ will denote its unit ball,
and $B^*$ will denote the weak$^*$ compact unit ball of the dual space $\Acal^*=C(X)^*$.

\br

Let $W$ be the symmetrized convex hull of $F$; that is,
$$W:=\conv (F \cup -F).$$
It is shown in \cite{GM-rose} that $W$ is a Rosenthal family for $X$,
and also a Rosenthal family for the larger space $B^*$ (where $W$ is considered as a set of
functions on $B^*$).

%\br
%
%\nt \textbf{Claim 1:} $W$ is also a Rosenthal family for $X$.
%\begin{proof} It is easy to see that $F \cup -F$ is a
%Rosenthal family for $X$. Now apply Proposition \ref{c:conv}.
%\end{proof}
%
%
%
%\nt \textbf{Claim 2:} \emph{$W$ is a Rosenthal family for
%$B^*$.}
%
%\begin{proof}
%Apply Proposition \ref{p:from X to B*}.
%\end{proof}

\br

Consider the sequence of sets 
$$
M_n:=2^n W + 2^{-n} B.
$$
Since $W$ is convex and
symmetric we can apply the construction of
%160111  DFJP
Davis-Figiel-Johnson-Pelczy\'nski
\cite{DFJP}
as follows.
Let $\| \ \|_n$ be the Minkowski
functional of the set $M_n$, that is,
$$
\| v\|_n = \inf\ \{\lambda
> 0  :  \ v\in \lambda M_n\}.
$$
Then $\| \ \|_n$ is a norm on $\Acal$ equivalent to the given norm
of $\Acal$. For $v\in \Acal,$ set
%$$
%N(v):=\left(\sum^\infty_{n=1} \| v \|^2_n\right)^{1/2} \hskip
%0.1cm \text{and} \hskip 0.1cm \hskip 0.1cm V: = \{ v \in \Acal
%\bigm| N(v) < \infty \},
%$$
$$
N(v):=\left(\sum^\infty_{n=1} \| v \|^2_n\right)^{1/2},
$$
and let
$$
 V: = \{ v \in \Acal : N(v) < \infty \}.
$$
Also let 
$$
B_V = \{v \in V : N(v) \leq 1\}, \quad {\text{and}} \quad S_V = \{v \in V : N(v) =1\},
$$
and denote by $j: V \hookrightarrow \Acal$ the inclusion map. 

\begin{claim}\label{claim1}
$(V,N)$ is a Banach space and $j: V \to \Acal$ is a continuous linear
injection, with
$$
W \subset j(B_V)=B_V.
$$
\end{claim}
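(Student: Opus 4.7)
The plan is to verify the standard DFJP construction step by step. Because $F$ is norm bounded and $W = \conv(F \cup -F)$, there is a constant $R$ with $W \subset R \cdot B$, so each $M_n = 2^n W + 2^{-n} B$ is a symmetric convex set satisfying $2^{-n} B \subset M_n \subset (2^n R + 2^{-n}) B$. The standard Minkowski-functional estimates then give
\[
\frac{\|v\|}{2^n R + 2^{-n}} \;\le\; \|v\|_n \;\le\; 2^n \|v\|,
\]
so each $\|\cdot\|_n$ is a norm on $\Acal$ equivalent to $\|\cdot\|$. In particular, $\|v\|_n = 0$ implies $v = 0$.

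Next I would verify that $N$ is a norm on $V$. Homogeneity is immediate, positive-definiteness follows from the previous observation applied to $\|\cdot\|_1$, and the triangle inequality is the Minkowski inequality in $\ell_2$ applied to the sequences $(\|v\|_n)_n$ and $(\|w\|_n)_n$. Continuity (and injectivity) of $j \colon V \hookrightarrow \Acal$ is then clear: from the estimate above with $n=1$, one obtains $\|v\| \le (2R + \tfrac12)\,\|v\|_1 \le (2R + \tfrac12)\, N(v)$.

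For completeness of $(V,N)$, I would take a Cauchy sequence $(v_k) \subset V$. By the continuity estimate it is Cauchy in $(\Acal, \|\cdot\|)$, so it converges in norm to some $v \in \Acal$. Since each $\|\cdot\|_n$ is equivalent to $\|\cdot\|$ on $\Acal$, we have $\|v_k - v\|_n \to 0$ for every fixed $n$. A truncation-plus-Fatou argument on the $\ell_2$ sum $\sum_n \|v_k - v_\ell\|_n^2$ (fix $N$, let $\ell \to \infty$, then $N \to \infty$) then shows both $v \in V$ and $N(v_k - v) \to 0$. This is the only genuinely delicate step; the rest is a routine check of Minkowski functionals.

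Finally, for the containment $W \subset B_V$: if $w \in W$ then $2^n w \in 2^n W \subset M_n$, so $\|w\|_n \le 2^{-n}$ for every $n$, and therefore
\[
N(w)^2 \;\le\; \sum_{n=1}^{\infty} 4^{-n} \;=\; \tfrac{1}{3} \;<\; 1,
\]
which yields $w \in B_V$. The identification $j(B_V) = B_V$ is then tautological, since $j$ is the set-theoretic inclusion of $V$ into $\Acal$. I expect the completeness step to be the main obstacle; everything else is a direct unpacking of the definitions.
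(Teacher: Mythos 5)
Your proposal is correct and, at bottom, takes the same route as the paper: the paper disposes of the Banach-space part of the claim with a single citation to the original DFJP article, and your step-by-step verification (the sandwich $2^{-n}B \subset M_n \subset (2^nR+2^{-n})B$ giving equivalence of each $\|\cdot\|_n$ with the sup norm, Minkowski's inequality in $\ell_2$ for the triangle inequality, and the truncation argument for completeness) is exactly the standard argument behind that citation. Your completeness step is equivalent to observing that $(V,N)$ is isometric to the diagonal subspace of the $\ell_2$-sum $Z=\bigl(\sum_{n}(\Acal,\|\cdot\|_n)\bigr)_{\ell_2}$, which is closed in the complete space $Z$ --- precisely the setup the paper itself uses in Appendix A (Lemma \ref{dense}), so nothing is lost by writing it out directly as you do. For the containment $W \subset B_V$, your computation coincides with the paper's, and your value $N(w)^2 \le \sum_{n\ge 1} 4^{-n} = \tfrac{1}{3}$ in fact corrects a harmless arithmetic slip in the paper, which writes $\sum_{n=1}^\infty 2^{-2n}=1$ (only $\le 1$ is needed, so the claim is unaffected either way). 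Two cosmetic remarks: in the completeness step you use the symbol $N$ both for the norm and for the truncation index, which should be disambiguated; and it is worth saying explicitly that $v\in V$ follows because $N(v_k - v)<\infty$ for large $k$ together with $v_k \in V$ gives $v = v_k - (v_k - v) \in V$. Both are trivial repairs; the proof stands.
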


\begin{proof}
This is proved in the original DFJP paper. To see the last assertion note that
if $v \in W$ then $2^nv \in M_n$, hence $\| v\|_n \leq
2^{-n}$ and $N(v)^2 \leq \sum_{n=1}^\infty   2^{-2n} \allowbreak = 1$.
\end{proof}

\br

The given action $G \times X \to X$ induces a natural linear norm preserving
continuous right action $C(X) \times G \to C(X)$ on the Banach space $\Acal=C(X)$.
It follows by the above construction that $W$ and $B$ are $G$-invariant subsets in $\Acal$.
This implies that $V$ is a $G$-invariant subset of $\Acal$ and the restricted natural linear action
$V \times G \to V, \ \ (v,g) \mapsto vg$ is norm preserving, that is, $N(vg)=N(v)$.
%%
%Moreover, by the definition of the norm $N$, we can show that this action is norm
%continuous (use the fact that, for each $n \in \N$, the norm
%$\norm{\cdot}_n$ on $\Acal$ is equivalent to the given norm on
%$\Acal$). 
Therefore, the co-homomorphism $h: G \to {\Iso}(V), \
h(g)(v):=vg$ is well defined.

Let $j^*: \Acal^* \to V^*$ be the adjoint map of $j: V \to \Acal$.
Define $\al: X \to V^*$ as follows. For every $x \in X \subset
C(X)^*$ set $\al(x)=j^*(\del_x)$. Then $(h,\al)$ is a 
representation of $(X,G)$ on the Banach space $V$.

\br

By the construction $F \subset W \subset B_V$.
Define $\nu: F \hookrightarrow B_V$ as the natural inclusion.
Then
\begin{equation*} \label{F}
f(x)= \langle \nu(f), \al(x) \rangle \ \ \ \forall \ f \in F, \ \ \forall \ x \in X.
\end{equation*}
%\begin{equation*} \label{F}
%f(x)= \al(x)(\nu(f)) \ \ \ \forall \ f \in F \ \ \forall \ x \in X.
%\end{equation*}
(We will write this more simply as $f(x) =\al(x)(f)$.)

It follows in particular that if $F$ separates points of $X$ then $\al$ is an embedding.

\br

%\nt \textbf{Claim:} 
\begin{claim}\label{claim2}
$B_V \subset \bigcap_{n \in \N} M_n =
\bigcap_{n \in \N} (2^n W + 2^{-n}B)$. 
\end{claim}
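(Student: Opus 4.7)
My plan is to derive the inclusion directly from the definitions. Assume $v \in B_V$, so $N(v) \le 1$. Since $N(v)^2 = \sum_{n=1}^\infty \|v\|_n^2$, we obtain $\|v\|_n \le 1$ for each $n \in \N$, and the task reduces, for each fixed $n$, to the implication $\|v\|_n \le 1 \Rightarrow v \in M_n$.

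I would verify this via the standard theory of Minkowski functionals. The set $M_n = 2^n W + 2^{-n} B$ is convex and symmetric, and since $2^{-n} B \subset M_n$ is a norm neighborhood of the origin, $\|\cdot\|_n$ is a norm on $\Acal$ equivalent to $\|\cdot\|$, whose closed unit ball coincides with $\overline{M_n}$, the norm closure of $M_n$. Thus $\|v\|_n \le 1$ immediately yields $v \in \overline{M_n}$. To upgrade this to $v \in M_n$, I would first observe that the Minkowski functional $\|\cdot\|_n$ is unchanged if $W$ is replaced by its norm-closed absolutely convex hull, and that with this replacement $M_n$ becomes norm closed: indeed $B$ is norm closed and bounded, and the (weak-$*$) relative compactness of $W$ in $\Acal^{**}$---available thanks to the Rosenthal-family hypothesis on $F$---yields norm-closedness of the Minkowski sum $2^n W + 2^{-n} B$ in $\Acal$. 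Combining, any $v$ with $\|v\|_n \le 1$ can be written as $v = 2^n w + 2^{-n} b$ with $w \in W$ and $b \in B$, which is the desired membership.

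The main obstacle is precisely this closedness step: in a general infinite-dimensional Banach space the Minkowski sum of two norm-closed convex bounded sets need not be norm closed without an additional compactness hypothesis, so the structural hypotheses on $W$ (and the presence of the full unit ball $B$) must be genuinely exploited. Once this closedness is secured, the claim reduces to a routine unpacking of the definitions of $B_V$, $N$, and the Minkowski functionals $\|\cdot\|_n$.
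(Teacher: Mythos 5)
Your reduction discards exactly the information that makes the claim easy, and the step you introduce to compensate is a genuine gap. From $N(v)\le 1$ you keep only the componentwise bounds $\|v\|_n\le 1$, and the implication $\|v\|_n\le 1\Rightarrow v\in M_n$ that you then need is equivalent to $M_n$ (or its replacement $2^n\overline{W}+2^{-n}B$) being norm closed; you correctly identify $\{v:\|v\|_n\le 1\}=\overline{M_n}$, so everything hinges on that closedness. But your justification fails: \emph{every} norm-bounded subset of $\Acal$ is relatively weak$^*$ compact in $\Acal^{**}$ by Goldstine--Alaoglu, so this property carries no information from the Rosenthal hypothesis, and, more to the point, weak$^*$ limit points of a sequence $w_k\in\overline{W}$ live in $\Acal^{**}$ and need not belong to $\Acal$; hence one cannot pass to the limit in $v=\lim_k\left(2^n w_k+2^{-n}b_k\right)$ to produce a decomposition inside $2^n\overline{W}+2^{-n}B$. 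What would suffice is weak compactness of $\overline{W}$ in $\Acal$ itself (then $b_k=v-2^n w_k$ would converge weakly into the weakly closed ball $B$), but that is precisely the WAP situation the paper must avoid: if $W$ were relatively weakly compact, the DFJP space $V$ would be reflexive and the counterexample of Section \ref{sec-CE} would collapse. A Rosenthal family only gives relative compactness in $B_1(X)$ for the pointwise topology, and those limits are typically discontinuous, hence outside $C(X)$; and in a nonreflexive space the sum of a closed bounded convex set and a closed ball can genuinely fail to be norm closed, since the distance to a closed bounded convex set need not be attained. So the upgrade from $\overline{M_n}$ to $M_n$ is not available by the route you propose.

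The paper's proof sidesteps all of this by exploiting strictness, and that is the repair: if $v\in B_V$ and $v\neq 0$, then no single $\|v\|_n$ can equal $1$, because $\sum_m\|v\|_m^2\le 1$ together with $\|v\|_n=1$ forces $\|v\|_m=0$ for all $m\neq n$, whence $v=0$ (each $\|\cdot\|_m$ is a norm equivalent to $\|\cdot\|$), a contradiction. Thus $\|v\|_n<1$ \emph{strictly} for every $n$, so by the definition of the Minkowski functional $v\in\lambda M_n$ for some $\lambda<1$; since $0\in W\cap B$ gives $0\in M_n$ and $M_n$ is convex, $\lambda M_n\subset M_n$ and therefore $v\in M_n$. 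This uses only convexity of $M_n$ and requires no closure considerations whatsoever. Your opening reduction and the identification of the $\|\cdot\|_n$-unit ball with $\overline{M_n}$ are fine; the flaw is that you reduced to a strictly stronger statement (closedness of $M_n$) that is unsupported and unnecessary, when the $\ell_2$-summability of the norms already hands you the strict inequality that closes the argument.
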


%hence also $S_V = \{v \in V : N(v) =1\}\subset  \bigcap_{n \in \N} M_n$.
\begin{proof}
%The norms $\norm{\cdot}_n$ on $\Acal$ are equivalent to each other. 
%It follows that 
%If $v \in B_V$ then $\| v\|_n \leq 1$ for all $n \in \N$. 
%If for  some $n$ we have $\| v\|_n < 1$ then $$v \in M_n$ by the definition of the Minko 
%As the sets $M_n$ are  convex this implies that $v \in M_n$ for every $n$.
%(If for some $n$ we have 
%That is, for every $n \in \N$, $v \in \lambda_n M_n$
%for some $0 < \lambda_n <1$. By the construction $M_n$ is a convex
%subset containing the origin. This implies that $\lambda_n M_n
%\subset M_n$. Hence $j(v)=v \in M_n$ for every $n \in\N$.
%%The second assertion follows since $\bigcap_{n \in \N} M_n$ is norm closed.
%The claim follows since the sets $M_n$ are closed.
If $\| v\| <1$ then $\| v\|_n < 1$ for all $n \in \N$ and, 
as the sets $M_n$ are  convex, this implies that $v \in M_n$ for every $n$.
If $\| v\| =1$ we must have $\| v\|_n < 1$ for all $n \in \N$ and again we conclude that
 $v \in M_n$ for every $n$.
\end{proof}

\br

One more important ingredient we will need is the following refinement of the construction of $V$ 
 \cite[Lemma 17.(2) (Lemma 4.4.(2) in the arXiv version)]{GM-OC}, which in turn relies on \cite[Lemma 1.2.2]{Fa}.

\begin{lem}\label{fabian}
Consider the injective map $j \colon V \to C(X)$ and let $\al : =j^* \circ \del \colon X \to V^*$
(where $\del(x) = \del_x$). 
Then 
\begin{enumerate}
\item
The image of $j^*$ is norm dense in $V^*$:
$$
\ol{j^*( C(X)^*)} = V^*.
$$
\item
The image $\al(X)$ is a $w^*$-generating subset of $V^*$;
i.e. 
$$
\spann(\overline{\conv}^{w^*}(\al(X))),
$$ 
is norm dense in $V^*$.
\end{enumerate}
 \end{lem}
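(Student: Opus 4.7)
The plan is to derive both assertions from the fact, supplied by the DFJP construction and recorded in \cite[Lemma~1.2.2]{Fa}, that the second adjoint $j^{**} \colon V^{**} \to C(X)^{**}$ of the continuous injection $j \colon V \hookrightarrow C(X)$ is itself injective. I would take this as the starting point, since it is the substantive technical content of the construction (in particular it is what distinguishes norm density in $V^*$ from the automatic $w^*$-density of $j^*(C(X)^*)$).

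For assertion (1) I would argue by contradiction using Hahn--Banach. If $j^*(C(X)^*)$ were not norm dense in $V^*$ there would exist a non-zero $\xi \in V^{**}$ annihilating $j^*(C(X)^*)$. But for every $\phi \in C(X)^*$ one has
\[
\langle \phi, j^{**}(\xi)\rangle = \langle j^*(\phi), \xi\rangle = 0,
\]
so $j^{**}(\xi) = 0$, contradicting the injectivity of $j^{**}$. Hence $\ol{j^*(C(X)^*)}=V^*$ in norm.

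For assertion (2) I would combine (1) with the $w^*$-to-$w^*$ continuity of the adjoint $j^*$ and the fact that the point masses $w^*$-generate $C(X)^*$. By Krein--Milman the probability measures satisfy $M(X) = \overline{\conv}^{w^*}\{\del_x : x \in X\}$, while the Jordan decomposition gives $C(X)^* = \spann(M(X))$. Since $j^*$ is linear and $w^*$-to-$w^*$ continuous, and since $j^*(M(X))$ is the $w^*$-continuous image of a $w^*$-compact convex set containing $\{j^*(\del_x) : x \in X\}=\al(X)$, one obtains
\[
j^*(M(X)) \subset \overline{\conv}^{w^*}(\al(X)),
\]
and therefore $j^*(C(X)^*) \subset \spann(\overline{\conv}^{w^*}(\al(X)))$. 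Combined with (1), this shows that $\spann(\overline{\conv}^{w^*}(\al(X)))$ is norm dense in $V^*$, as required.

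The only non-formal ingredient is the injectivity of $j^{**}$, which is the delicate conclusion of the DFJP construction supplied by Fabian's lemma; once that is invoked, both parts reduce to routine manipulations with adjoints, $w^*$-continuity, and Hahn--Banach, so I expect no further obstacles beyond correctly citing this input from \cite{Fa}.
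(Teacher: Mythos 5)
Your proposal is correct, but it reaches assertion (1) by a genuinely different route than the paper. The paper's Appendix A never passes through the bidual: it reproves Fabian's lemma (Lemma \ref{dense}) from scratch, realizing $V$ as the diagonal subspace $Y$ of the $\ell_2$-sum $Z=\bigl(\sum_n (C(X),\|\cdot\|_n)\bigr)_{\ell_2}$, extending a given $y^*\in Y^*$ to $Z^*$ by Hahn--Banach, and showing via a Cauchy--Schwarz tail estimate that the partial sums $T^*\bigl(\sum_{n\le m}\xi_n\bigr)$ converge to $y^*$ in norm; the norm density of $j^*(C(X)^*)$ is then the direct conclusion. You instead take the injectivity of $j^{**}$ as a black box and recover (1) by duality. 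This is logically sound --- indeed, norm density of $j^*(C(X)^*)$ in $V^*$ is \emph{equivalent} to injectivity of $j^{**}$, by exactly your two-line computation --- but it makes your proof of (1) a formal dualization whose entire substance lives in the citation, and the citation is slightly off: \cite[Lemma 1.2.2]{Fa}, as reproduced in the paper's Appendix A, states the norm density of $T^*W^*$ in $Y^*$ itself (i.e., assertion (1) up to renaming), not the injectivity of $T^{**}$; the injectivity of $j^{**}$ is a standard conclusion of the factorization lemma in \cite{DFJP} and should be cited from there. With that correction your argument stands, trading the appendix's self-containedness for brevity. Your treatment of (2) --- Krein--Milman to get $M(X)=\ol{\conv}^{w^*}\{\del_x : x\in X\}$, the Jordan decomposition to get $C(X)^*=\spann(M(X))$, and $w^*$-to-$w^*$ continuity of $j^*$ to get $j^*(M(X))\subset \ol{\conv}^{w^*}(\al(X))$ --- is precisely the argument the paper compresses into ``the assertion of Lemma \ref{fabian}(2) follows easily,'' so spelling it out is a useful addition rather than a deviation.
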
 

For the reader's convenience we reproduce the proof of Lemma \ref{fabian}
in Appendix \ref{App} below.

\br

With this background at hand we now proceed with our construction as follows:
Given a positive $\del <1$,  let $n_0$ satisfy 
\begin{equation}\label{n0}
2^{-n_0} < \frac{\del}{4}.
\end{equation}
Let $\ep = \frac{1}{10}\del 2^{-n_0}$.

\br

%--------------------------------------------
%
%
%
%
%
%Choose a sequence $F_0 = \{f_1 = \ch , f_2, f_3, \dots\} \subset C(X)$ such that:
%\begin{enumerate}
%\item[(i)]
% $1 - \ep \leq f_n(x) \leq 1$ for every $x \in X$ and $n =1,2,\dots$,
% \item[(ii)]
%$\|f_n \| =1$ for every $n$, and
%% \item[(iii)]
%%for every $x \in X$ there is some $n$ with $f_n(x) =1$, 
%\item[(iv)]
%the sequence $F_0$ separates points on $X$.
%\end{enumerate}
%

%-----------------------------------------------------

We will next construct a suitable Rosenthal family $F \subset X$.
Choose a function $f \in C(X)$ such that 
$1- \ep \leq f(x) \le 1$ for every $x \in X$ and such that
the values $1$ and $1-\ep$ are attained by $f$.
Let $F_0 = \{f \circ g : g \in G\}$.
We can assume that the set $F_0$ separates points on $X$ (otherwise
we will replace the system $(X,G)$ by the factor which $f_0$ generates).
Let $F : = {\rm norn}{\text{-}}\cls F_0$.
Since our system $(X,T)$ is tame, the family $F_0$ is a $G$-invariant Rosenthal family
and therefore so is $F$.

%---------------------------------------------------

We now use the set $F$ to create $W: =\conv (F \cup -F)$ and $V$ as above.

%
%\begin{lem}
%%We have
%$$
%\|\al(x)\|_{V^*} =1, \qquad \forall x \in X.
%$$
%\end{lem}
%
%\begin{proof}
%By definition
%$$
%\|\al(x)\|_{V^*} = \sup_{w \in S_V} |\al(x)(w)| =
% \sup_{w \in S_V} |w(x)|,
%$$
%and since $F_0    \subset S_V$, this supremum is $1$.
%\end{proof}
%

\begin{lem}\label{t}
There is a number $t > 0$ such that
$$
\|\al(x)\|_{V^*} =t, \qquad \forall x \in X.
$$
\end{lem}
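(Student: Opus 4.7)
The plan is to show that the function $\phi(x) := \|\alpha(x)\|_{V^*}$ is a $G$-invariant, lower semicontinuous function on $X$, and then invoke minimality of $(X,G)$ to conclude that it is constant.

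First I would observe that the representation is equivariant: by construction $\alpha = j^* \circ \delta$, and for every $g \in G$ and $x \in X$ one has $\alpha(gx) = h(g)\alpha(x)$, where $h(g) \in \mathrm{Iso}(V)$. Since the dual action of a linear isometry is again an isometry on $V^*$, this gives $\|\alpha(gx)\|_{V^*} = \|\alpha(x)\|_{V^*}$, so $\phi$ is $G$-invariant. Next, I would use the standard fact that the dual norm $\|\xi\|_{V^*} = \sup_{v \in B_V}|\xi(v)|$ is a supremum of weak$^*$-continuous linear functionals on $V^*$, hence weak$^*$-lower semicontinuous. Composing with the weak$^*$-continuous map $\alpha \colon X \to V^*$ shows that $\phi \colon X \to \mathbb{R}$ is lower semicontinuous.

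By compactness of $X$ and lower semicontinuity, $\phi$ attains its infimum at some point; call this minimum value $t \geq 0$. The sublevel set
\[
A := \{x \in X : \phi(x) \leq t\} = \phi^{-1}(\{t\})
\]
is then closed (since $\phi$ is lsc), nonempty, and $G$-invariant (since $\phi$ is $G$-invariant). Because the system $(X,G)$ chosen at the outset of the construction is minimal, $A = X$, so $\phi \equiv t$.

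It remains to argue that $t > 0$. Here I would use the fact that $F$ separates the points of $X$ (this was arranged, by passing to a factor if necessary, when we defined $F_0$), so that Theorem \ref{t:general} ensures $\alpha \colon X \to V^*$ is a topological embedding. Since $X$ is infinite (it is an infinite minimal strongly proximal system such as in Example \ref{exa-DM}), $\alpha$ cannot be identically zero, and combined with the constancy of $\phi$ this forces $t > 0$. I do not expect any serious obstacle; the whole argument rests only on weak$^*$-lower semicontinuity of the dual norm and minimality of $(X,G)$.
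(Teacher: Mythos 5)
Your proof is correct, and its core mechanism --- $G$-invariance of $\phi(x)=\|\al(x)\|_{V^*}$ plus minimality of $(X,G)$ --- is the same as the paper's. The paper phrases the constancy step via a dense orbit: fixing $x_0$ with $\|\al(x_0)\|_{V^*}=t$, density of $\{gx_0 : g\in G\}$ gives $\|\al(x)\|_{V^*}\leq t$ for all $x$, and the symmetric argument starting from an arbitrary $x$ yields equality. Note that the paper's ``it follows'' in that step silently uses exactly the weak$^*$ lower semicontinuity of the dual norm that you make explicit, so your closed-sublevel-set formulation is a slightly cleaner packaging of the same idea rather than a new one. Where you genuinely diverge is the positivity $t>0$: the paper invokes Lemma \ref{fabian}, i.e.\ the $w^*$-generating property of $\al(X)$, so that $\al\equiv 0$ would force $V^*=\{0\}$; you instead use the embedding clause of Theorem \ref{t:general} (legitimate, since $F$ was arranged to separate points of $X$) together with $\card X\geq 2$. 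Both are valid; yours trades the density statement for injectivity. It is worth noting that in this particular construction there is an even more elementary quantitative route that both you and the paper pass over: since $f\in F\subset W\subset B_V$ and $f\geq 1-\ep$ on $X$, one has $\|\al(x)\|_{V^*}\geq \al(x)(f)=f(x)\geq 1-\ep$, so in fact $t\geq 1-\ep$ with no appeal to either lemma. One typographical slip: equivariance should read $\al(gx)=g\al(x)$ with the dual action $(g\varphi)(v)=\varphi\bigl(h(g)v\bigr)$, i.e.\ it is the adjoint of $h(g)$, not $h(g)$ itself, that acts on $V^*$; since adjoints of isometries are isometries, this does not affect your argument.
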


\begin{proof}
Let $x_0 \in X$ and let $\|\al(x_0)\|_{V^*} = t$. Then, by minimality of $(X,G)$, 
the set $\{gx_0 : g \in G\}$ is dense in $X$ and
it follows  that $\|\al(x)\|_{V^*}  \leq t$ for every $x \in X$. 
Since the same argument applies to any $x \in X$,
we conclude that indeed $\|\al(x)\|_{V^*} = t$ for every $x \in X$.
Since the set $\al(X)$ generates $V^*$ we cannot have $t =0$.
\end{proof}

\begin{lem}
%We have
$$
\max \{|w(x) - w(y)| : x, y \in X\} \leq \ep
$$
for every $w \in W$.
\end{lem}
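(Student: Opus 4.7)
The plan is to propagate the oscillation bound $\le \ep$ of the original generator $f$ through each of the three constructions used to build $W$, namely the $G$-orbit, the norm closure, and the symmetric convex hull.

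First I would observe that the hypothesis $1-\ep \le f(x) \le 1$ forces $|f(x)-f(y)| \le \ep$ for all $x,y \in X$. Since the $G$-action on $C(X)$ is by composition on the right (which does not enlarge the range of a function), every element $f \circ g \in F_0$ inherits the identical bound: $|(f \circ g)(x) - (f \circ g)(y)| \le \ep$. This takes care of $F_0$.

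Next I would pass to $F = \text{norm-}\cls(F_0)$. If $f_n \to f'$ uniformly with $f_n \in F_0$, then $f_n$ converges pointwise to $f'$, so the inequality $|f_n(x) - f_n(y)| \le \ep$ passes to the limit, yielding $|f'(x) - f'(y)| \le \ep$ for every $f' \in F$. The same bound holds trivially for $-f' \in -F$.

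Finally, for an arbitrary $w \in W = \conv(F \cup -F)$, write
\begin{equation*}
w = \sum_{k=1}^{n} \la_k w_k, \qquad \la_k \ge 0, \quad \sum_k \la_k = 1, \quad w_k \in F \cup -F.
\end{equation*}
Then by the triangle inequality
\begin{equation*}
|w(x) - w(y)| \le \sum_{k=1}^n \la_k |w_k(x) - w_k(y)| \le \sum_{k=1}^n \la_k \ep = \ep,
\end{equation*}
for all $x,y \in X$, which is exactly the asserted bound. There is no real obstacle here; the lemma is essentially a bookkeeping check that the tight range condition on $f$ survives the convex-hull construction unchanged, and it will be used later to control how far elements of $W$ can separate different points in the image $\al(X) \subset V^*$.
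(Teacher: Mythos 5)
Your proof is correct and takes essentially the same route as the paper's: each element of $\pm F_0$ has oscillation at most $\ep$ (since composition with $g$ does not change the range of $f$), and the triangle inequality propagates this bound through convex combinations. The only difference is that you treat the norm closure $F = {\rm norm}{\text{-}}\cls\, F_0$ explicitly as a separate step, whereas the paper compresses that limit argument into the phrase ``it suffices to show'' and computes directly with convex combinations of elements of $\pm F_0$ --- your version is, if anything, marginally more complete.
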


\begin{proof}
It suffices to show that this inequality holds for 
functions of the form $w = \sum_{j=1}^N p_j f_{n_j}$,
where $\sum_{j=1}^N p_j =1,\  0 < p_j$, and $f_{n_j} \in \pm F_0$, for
$j =1,2,\dots,N$.
Now for such $w$ and $x, y \in X$,
\begin{align*}
|w(x) - w(y)| &= \left|\sum_{j=1}^N p_j f_{n_j}(x) - \sum_{j=1}^N p_j f_{n_j}(y)\right|\\
&= \left| \sum_{j =1}^N p_j (f_{n_j}(x) - f_{n_j}(y)) \right| \\
& \leq \sum_{j =1}^N p_j |f_{n_j}(x) - f_{n_j}(y)| \\
& \leq \sum_{j =1}^N p_j\ep = \ep.
\end{align*}
\end{proof}

\begin{lem}\label{alpha}
%We have 
$$
\|\al(x) - \al(y) \|_{V^*}  \leq \del
$$
for every $x, y \in X$.
\end{lem}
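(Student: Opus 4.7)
The plan is to reduce the dual norm to a supremum over the unit ball $B_V$ and then exploit the DFJP inclusion from Claim \ref{claim2} to split each $v\in B_V$ into a piece coming from $W$ (where the previous lemma gives tiny oscillation) and a piece of small coefficient coming from $B$.

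First I would unfold the definition: since $\alpha(x)=j^*(\delta_x)$, for any $v\in V$ one has $\alpha(x)(v)=v(x)$, so
\[
\|\alpha(x)-\alpha(y)\|_{V^*}=\sup_{v\in B_V}|v(x)-v(y)|.
\]
Thus it suffices to bound $|v(x)-v(y)|$ by $\delta$ uniformly in $v\in B_V$.

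Next I would invoke Claim \ref{claim2}, which gives $B_V\subset 2^{n_0}W+2^{-n_0}B$. Hence any $v\in B_V$ admits a decomposition $v=2^{n_0}w+2^{-n_0}b$ with $w\in W$ and $b\in B$ (the unit ball of $C(X)$). The previous lemma yields $|w(x)-w(y)|\le \varepsilon$, while trivially $|b(x)-b(y)|\le 2\|b\|\le 2$. Therefore
\[
|v(x)-v(y)|\le 2^{n_0}\,\varepsilon+2^{-n_0}\cdot 2.
\]

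Finally I would plug in the choices made just before the lemma: $\varepsilon=\tfrac{1}{10}\delta\,2^{-n_0}$ makes the first term equal to $\delta/10$, and $2^{-n_0}<\delta/4$ (from \eqref{n0}) makes the second term less than $\delta/2$. The sum is at most $\tfrac{6\delta}{10}<\delta$, yielding the claimed inequality. There is no real obstacle here; the content of the argument is the single clever parameter choice that balances the two terms of the DFJP decomposition, and that balancing has already been built into the definitions of $n_0$ and $\varepsilon$.
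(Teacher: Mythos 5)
Your proof is correct and follows essentially the same route as the paper: reduce $\|\al(x)-\al(y)\|_{V^*}$ to $\sup|v(x)-v(y)|$ over the unit ball, split $v=2^{n_0}w+2^{-n_0}b$ via Claim \ref{claim2}, bound the $W$-part by the oscillation lemma and the $B$-part trivially, and conclude with the prescribed choices of $n_0$ and $\ep$. The only cosmetic difference is that the paper takes the supremum over the unit sphere $S_V$ rather than the ball $B_V$, which changes nothing.
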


\begin{proof}
By definition
\begin{align*}
\|\al(x) - \al(y) \|_{V^*} & = \sup_{v \in S_V} | (\al(x) - \al(y))(v)| \\
& =\sup_{v \in S_V} |\al(x)(v) - \al(y)(v)|\\
& = \sup_{v \in S_V} |v(x) - v(y)|.
\end{align*}
Now, $S_V \subset \bigcap_{n \in \N} (2^n W + 2^{-n}B)$
hence, a fortiori, $S_V \subset 2^{n_0} W + 2^{-n_0}B$
(see equation (\ref{n0}) above).
Writing $v \in S_V$ as $v =2^{n_0}  w  + 2^{-n_0} b$, with $w \in W$ and $b \in B$, we have
\begin{align*}
|v(x) - v(y)| & = | 2^{n_0}  w(x)  + 2^{-n_0} b(x) - (2^{n_0}  w(y)  + 2^{-n_0} b(y))  | \\
& \leq 2^{n_0} |w(x) - w(y)| + 2^{-n_0} |b(x) - b(y)|\\
& \leq 2^{n_0}\ep + 2^{-n_0}\cdot 2 \\
& \leq 2^{n_0} (\frac{1}{10}\del 2^{-n_0}) + 2 \frac{\del}{4}  \leq \del.
\end{align*}
\end{proof}

\br

We are now ready to present our counterexample to question \ref{conj-K}.

\begin{thm}\label{thm-CE}
For $G = F_2$, the free group on two generators and for every $0 < \del < 1$
there exists a separable Rosenthal Banach space $V$ and a representation
$$
h \colon G \to {\Iso}(V), \ \ \ \alpha \colon X \to V^*
$$
such that 
\begin{enumerate}
\item
The only $G$ fixed point in $V^*$ is $0$.
\item
There exists an element $\xi \in V^*$, $\|\xi\|=1$ 
%and a  closed ball of radius $10\del$ centered at $\xi$ which is $G$-invariant.
such that $\|g\xi - \xi\| \leq \del$ for every $g \in G$.
\end{enumerate}
\end{thm}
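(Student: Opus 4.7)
The plan is to combine Lemmas \ref{t}, \ref{alpha}, and \ref{fabian} with the strong proximality and minimality of $(X,G)$. For part (2), I would normalize: set $\xi := \al(x_0)/t$ for any fixed $x_0 \in X$, where $t = \|\al(x_0)\|_{V^*}$ is the common norm guaranteed by Lemma \ref{t}. Then $\|\xi\|_{V^*} = 1$ by definition, and for every $g \in G$, Lemma \ref{alpha} yields
\[
\|g\xi - \xi\| \;=\; t^{-1}\|\al(gx_0)-\al(x_0)\| \;\le\; \del/t.
\]
Since $f \in F \subset B_V$ and $\al(x)(f) = f(x) \ge 1-\ep$, one has $t \ge 1-\ep$, so the factor $1/t$ is close to $1$; I would absorb it by initially running the construction with target tolerance $\del' := \del(1-\ep)$, so that the final bound is $\le \del$.

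For part (1), form the $w^*$-compact, convex, $G$-invariant set $K := \overline{\conv}^{\,w^*}(\al(X)) \subset V^*$. Strong proximality of $(X,G)$ transfers through the $G$-equivariant, $w^*$-continuous barycenter map $M(\al(X))\to K$ to make the affine flow $(K,G)$ strongly proximal, with unique minimal subsystem $\al(X)\cong(X,G)$. Since $(X,G)$ is minimal and $|X|>1$, there are no $G$-fixed points in $\al(X)$, hence none in $K$; in particular $0\notin K$, because every $\mu\in K$ satisfies $\mu(f)\ge 1-\ep>0$. Now suppose $\eta\in V^*$ is $G$-fixed; the translate $\eta+K$ is $w^*$-compact, convex, $G$-invariant, and affinely $G$-isomorphic to $K$ via $\mu\mapsto\eta+\mu$ (equivariant because $g\eta=\eta$), so the flow $(\eta+K,G)$ is likewise strongly proximal with no $G$-fixed point.

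The hard part is bridging from ``no $G$-fixed point in $K$ or any translate'' to ``the only $G$-fixed point in $V^*$ is $0$''. My intended route invokes Lemma \ref{fabian}(2): $\spann(K)$ is norm-dense in $V^*$, so $\eta$ is determined by its pairings with elements coming from $K$. Given $\eta$ $G$-fixed and any $\mu_0\in K$, strong proximality of $(K,G)$ yields a net $g_i$ with $g_i\mu_0\to\al(z)$ weakly$^*$ for some $z\in X$; then $g_i(\eta+\mu_0)=\eta+g_i\mu_0\to\eta+\al(z)$ weakly$^*$, while $\|g_i(\eta+\mu_0)\|=\|\eta+\mu_0\|$ is constant by isometry. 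Varying $\mu_0$ throughout $K$ and using $w^*$-lower semicontinuity of the norm together with the generating property of $\al(X)$ should pin down $\eta=0$. A backup plan relies on Lemma \ref{fabian}(1): approximate $\eta$ in norm by $j^*(\mu_n)$ with $\mu_n\in C(X)^*$ and exploit $(C(X)^*)^G=\{0\}$ — the absence of nonzero $G$-invariant signed measures on $X$, a direct consequence of minimality and strong proximality of $(X,G)$ — to force the approximants, and hence $\eta$ itself, to collapse to zero.
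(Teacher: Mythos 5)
Your treatment of part (2) is correct and in fact more careful than the paper's own: the paper simply takes $\xi = \al(x)$ and tacitly assumes $t=1$ in Lemma \ref{t}, whereas you normalize explicitly and justify $t \ge 1-\ep$ via $f \in F \subset B_V$. (Your retuning $\del' := \del(1-\ep)$ is mildly circular, since $\ep$ is itself chosen in terms of the target tolerance; taking, say, $\del' = \del/2$ and noting $\ep' < 1/2$ removes the circularity.) The genuine gap is in part (1), and you flagged it yourself: neither of your two bridging routes closes it. In the first route, collapsing a \emph{single} $\mu_0 \in K$ and translating by the putative fixed point $\eta$ yields, via weak$^*$ lower semicontinuity, only inequalities of the form $\|\eta + \al(z)\| \le \|\eta + \mu_0\|$; no amount of varying $\mu_0$ extracts $\eta = 0$ from these, as they are compatible with plenty of nonzero $\eta$. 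The missing idea --- and it is exactly what the paper does, using precisely the ingredients you listed --- is to apply strong proximality to the norm approximation of $\eta$ \emph{itself}: by Lemma \ref{fabian}(2) write $\left\|\sum_{i=1}^k a_i\theta_i - \eta\right\| \le \ep$ with $\theta_i \in K$, collapse all $k$ points \emph{simultaneously} along one sequence, $g_n\theta_i \to \al(z)$ for every $i$ (induction on $k$, using that $\al(X)$ is the unique minimal subset of $K$), use the isometry of the action together with $g_n\eta = \eta$ to preserve the $\ep$-bound along the orbit, and pass to the weak$^*$ limit to obtain $\left\|\left(\sum_i a_i\right)\al(z) - \eta\right\| \le \ep$. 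Letting $\ep \to 0$ and invoking compactness gives $\eta = c\,\al(z_0)$ for some scalar $c$ and $z_0 \in X$; since $\eta \neq 0$ forces $c \neq 0$, the point $\al(z_0) = \eta/c$ is $G$-fixed, so $z_0$ is a fixed point of the nontrivial minimal system $(X,G)$, a contradiction.

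Your backup plan is not merely incomplete but circular. From $\|j^*(\mu_n) - \eta\| \to 0$ and the invariance of $\eta$ you can conclude only that $j^*(\mu_n)$ is almost invariant in the $V^*$-norm, uniformly in $g$; this says nothing about near-invariance of $\mu_n$ in total variation, because $j^*$ has dense but far-from-closed range and is not bounded below, so the fact that $(C(X)^*)^G = \{0\}$ cannot be brought to bear on the approximants. Worse, the inference you would ultimately need --- that an almost invariant element of $V^*$ must lie close to an invariant one and hence be forced toward $0$ --- is precisely the implication this theorem is constructed to refute: part (2) of the very statement exhibits unit vectors $\al(x)$ with $\|g\al(x) - \al(x)\| \le \del$ for all $g \in G$, while the only invariant vector is $0$. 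So no quantitative stability argument of that kind is available in $V^*$, and the first route, completed by the simultaneous-collapsing step above, is the way to go.
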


\begin{proof}
We consider the representation of the minimal strongly proximal and tame system $(X,G)$
$$
h  \colon  G \to \Iso(V), \ \ \ \al  \colon 
X \to V^*
$$
on the Rosenthal Banach space $V$ described above.
(Again we note that, since $(X,G)$ is tame, our family $F \subset C(X)$ is a Rosenthal family,
although this fact plays no part in the proof; see Remark \ref{no-need} below.)

(1) \ 
The continuous map $j^* \colon  C(X)^* \to V^*$
intertwines the $G$-actions on these spaces.
Let $Q: = \overline{\conv}^{w^*}(\al(X))$. As a homomorphic image of 
$(\overline{\conv}^{w^*}(X),G)$ the system $(Q,G)$
is a strongly proximal affine system. Moreover, the restriction of
$j^*$ to $\{\del_x : x \in X\}$, namely the function $\al$, is an
isomorphism  $\al \colon (X,G) \to (\al(X),G)$.

Suppose now that $\xi \in V^*$ is a nonzero $G$ fixed point.
Normalizing we can assume that $\|\xi \|=1 = t$ (see Lemma \ref{t}).

By Lemma \ref{fabian}, 
i.e. by the norm density of $\spann(Q)$, given $\ep >0$ there are
real numbers $a_1, a_2, \dots, a_k$ and elements $\theta_1, \theta_2, \dots,\theta_k$ in $Q$
such that 
$$
\left\|\sum_{i=1}^k a_i \theta_i - \xi \right\| \leq \ep.
$$ 
By strong proximality there is a  sequence $g_n$ in $G$, and $z$ a point in $X$, such that 
$$
\lim g_n \theta_i = \al(z), \qquad \forall \ \ 1 \leq i \leq k,
$$
(to see this use induction on $k$ and the fact that $X$ is the unique minimal subset of $Q$).
As $G$ acts by norm isometries we also have
$$
\left\|\sum_{i=1}^k a_i g_n \theta_i - \xi \right\| \leq \ep, \qquad \forall g_n.
$$ 
As the closed ball of radius $\ep$ is $w^*$-compact, passing to the limit,  we conclude that
$$
\left\|\left(\sum_{i=1}^k a_i \right)\al(z)  - \xi \right\| \leq \ep.
$$
Note that, as both $\|\xi\| =1$ and $\|\al(z)\|=1$, this implies that
$|\sum_{i=1}^k a_i| \leq 1+ \ep$.
Finally, as $\ep$ was arbitrary we conclude, by compactness, that 
for some $z \in Z$ we have $\al(z) = \pm\xi$, contradicting the fact that
the system $(\al(X),G)$ is not trivial.

(2) \ By Lemma \ref{alpha} we have $\|\al(x) - \al(y)\| \leq \del$, for every $x, y \in X$.
Thus for any $x \in X$ we get, with $\xi =\al(x)$, 
$$
\|g\xi - \xi\| = \| g\al(x) - \al(x)\| =
\|\al(gx) - \al(x)\| \leq \del, \quad \forall g \in G.
$$
\end{proof}

\begin{rmk}\label{no-need}
Our proof does not rely on the full force of Theorem \ref{t:general}; namely
the fact that the resulting Banach space $V$ is Rosenthal is not needed.
All we need from the DFJP construction are the Claims \ref{claim1} and \ref{claim2}. 
\end{rmk}

\br

\begin{rmk}
A variant of the question \ref{conj-K}, also suggested by Kazhdan and Yom Din, is as follows:

\begin{question}\label{conj-K2}
For any Banach space $V$, equipped with a linear and isometric action of a discrete group $G$, 
there exists a positive function $\ep (\delta), \ 0 < \del < 1$, such for any 
$\alpha \in V^*, \ \|\al\|=1$, such that $\| g(\alpha)- \alpha \|\leq \ep (\delta ), \
\forall g\in G$, there exists a G-invariant $\beta \in V^*$ such that $\| \beta- \alpha \|\leq \delta $.
\end{question}

%Now replacing the group $F_2$ by the direct sum
%%$G=F_\infty$, the free group on a countable set of generators, 
%$G = \bigoplus_{n \in \N} F_2$,

Now we can easily tweak our construction to refute this latter question as well.  
Applying Theorem \ref{thm-CE} with $\del = 1/n$, let us denote by $V^*_n$ the resulting Banach
space with its $F_2$ action. 
%(with $\|\al_n(x)\| = 2 ^{-n}, \forall x \in X$). 
Let ${\mathbf{W}} = \bigoplus_{n \in \N} V^*_n$ be the $\ell_2$-sum
of these Banach spaces.
This is a Rosenthal space and it is also true that 
$$
\left( \bigoplus_{n \in \N} V_n \right)^* = \bigoplus_{n \in \N} V^*_n 
$$
(see \cite[Lemma 3 (Lemma 1.14 in the arXiv version)]{GM-OC}).
Let $G$ act diagonally on ${\mathbf{W}}$. 
%where we consider $G$ as the direct sum $\bigoplus_{n \in \N} F_2$.
It is then easily checked that for this action of $G$ on ${\mathbf{W}}$,
$0$ is the unique fixed point, and that the assertion of Question \ref{conj-K2} does not hold.
\end{rmk}

%\br
%
%\begin{rmk}
%Applying a deep result of Haydon \cite[Theorem 3.3]{Hay} we can show that the map $j^* : C(X)^* \to V^*$
%is actually surjective, as follows.
%
%\begin{thm}\label{Haydon}
%Let $V$ be a Banach space. The following conditions are
%equivalent:
%\begin{enumerate}
%\item
%$V$ is a Rosenthal Banach space.
%\item 
%For every weak$^*$ compact subset $Y \subset V^*$ the weak$^*$
%and norm closures of the convex hull $co(Y)$ in $V^*$ coincide:
%$\overline{\conv}^{w^*} (Y)=\overline{\conv}^{norm} (Y)$. 
%\end{enumerate}
%\end{thm}
%\end{rmk}
%
%Now this theorem combined with \ref{fabian}
%
%
%

\br

\section{Appendix A: A proof of Lemma \ref{fabian}}\label{App}

Consider a Banach space $(W, \| \cdot\|)$ and a sequence of equivalent norms $\{\|\cdot\|_n\}_{n=1}^\infty$ 
on $W$. 
Let $Z$ be the Banach space
$$
Z = (\sum_{n=1}^\infty W, \| \cdot\|_n)_{\ell_2},
$$ 
that is
$$
Z = \{(w_1,w_2,\dots) \in W^\N : \sum_{n=1}^\infty \|w_n\|^2_n < \infty\},
$$
and norm $\vertiii{\cdot}$ on $Z$ defined by
$$
\vertiii{(w_1,w_2,\dots)}= \left( \sum_{n=1}^\infty \|w_n\|^2_n \right)^{1/2}.
$$
Let $Y \subset Z$ be the subspace
$$
\{(w,w,\dots) \in Z : w \in W\}
$$
and define $T : Y \to W$ by $T((w,w,\dots)) = w$.

\begin{lem}\label{dense}\cite[Lemma 1.2.2]{Fa}
The map $T$ is linear, injective and continuous and $T^* W^*$ is dense in $Y^*$.
\end{lem}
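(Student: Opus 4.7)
The plan is to treat the two assertions separately: part (1) is routine, and part (2) follows from the standard duality for $\ell_2$-direct sums together with a density-by-truncation argument.

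For (1), linearity is immediate from the definition, and injectivity is clear since $T((w,w,\dots)) = w$ forces $w = 0$. For continuity, I would use that $\|\cdot\|_1$ is equivalent to $\|\cdot\|$: there is $c>0$ with $\|w\| \le c\|w\|_1$, and hence for $y=(w,w,\dots)\in Y$,
$$
\|Ty\| = \|w\| \le c\,\|w\|_1 \le c\,\vertiii{y}.
$$

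For (2), the key step is the isometric identification
$$
Z^* \;\cong\; \Bigl(\sum_{n=1}^\infty W_n^{\ast}\Bigr)_{\ell_2},
$$
where $W_n^\ast$ denotes the dual space $W^*$ endowed with the dual norm $\|\cdot\|_n^{*}$ of $\|\cdot\|_n$, and the pairing is $\langle (w_n^*),(w_n)\rangle = \sum_n w_n^*(w_n)$ (which converges by Cauchy--Schwarz). By Hahn--Banach, the restriction map $\pi\colon Z^*\to Y^*$ is surjective, so every $y^*\in Y^*$ has the form $y^* = \pi(z^*)$ for some $z^* = (w_1^*,w_2^*,\dots)\in Z^*$. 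Since $\sum_n (\|w_n^*\|_n^{*})^2 < \infty$, the truncations $z_N^* := (w_1^*,\dots,w_N^*,0,0,\dots)$ satisfy $\vertiii{z^*-z_N^*}\to 0$, and therefore $\pi(z_N^*)\to y^*$ in $Y^*$.

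It remains to check that each $\pi(z_N^*)$ lies in $T^*(W^*)$, which is a direct computation: for $y=(w,w,\dots)\in Y$,
$$
\pi(z_N^*)(y) \;=\; \sum_{n=1}^N w_n^*(w) \;=\; \Bigl(\sum_{n=1}^N w_n^*\Bigr)(w) \;=\; T^*\!\Bigl(\sum_{n=1}^N w_n^*\Bigr)(y),
$$
where the finite sum makes sense in $W^*$ because the underlying vector spaces of all the $W_n^\ast$ coincide with $W^*$ (the norms on $W$ being mutually equivalent). The only point of the argument that requires a moment's care, and which I would state explicitly, is the duality identification of $Z^*$; everything else is a matter of unwinding definitions.
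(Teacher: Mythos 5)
Your proof is correct and follows essentially the same route as the paper: Hahn--Banach extension of $y^*$ to $z^*\in Z^*$, the identification of $Z^*$ with the $\ell_2$-sum of the dual spaces $(W^*,\|\cdot\|_n^*)$, and approximation by $T^*$ of finite partial sums of the coordinate functionals. Your packaging of the final step as ``truncations $z_N^*$ converge to $z^*$ in $Z^*$, then apply the norm-one restriction map'' is just the paper's Cauchy--Schwarz tail estimate $\vertiii{y^*-T^*(\sum_{n=1}^m \xi_n)}\leq \bigl(\sum_{n=m+1}^\infty \|\xi_n\|_n^2\bigr)^{1/2}$ stated in slightly different words, so there is no substantive difference.
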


\begin{proof}
Clearly $T$ is linear and injective. The continuity of $T$ follows from the fact that $\| \cdot \|_1$ is equivalent
to $\| \cdot \|$.
Let $y^* \in Y^*$ be given, By the Hahn-Banach theorem there is $z^* \in Z^*$ such that
$z^* \rest Y = y^*$.
Using the definition of $Z$ we can find elements $\xi_n \in W^*$ such that
$\sum_{n=1}^\infty \|\xi_n\|^2_n < \infty$ 
(here we use the same symbol for a norm and its dual norm),
and such that for every
$y = (w,w,\dots) \in B_Y$,
$$
\langle y^*, y \rangle = \langle z^*, y\rangle = \langle z^*, (w,w,\dots)\rangle =
\sum_{n=1}^\infty \langle \xi_n,w \rangle.
$$
It follows that for $m =1,2,\dots$,
\begin{align*}
\langle y^* - T^*(\sum_{n=1}^m \xi_n), y \rangle & =
\langle y^*, y \rangle -
\sum_{n=1}^m \langle \xi_n, w \rangle \\
& \leq \sum_{n = m+1}^\infty \|\xi_n\|_n \|w\|_n \\
& \leq
(\sum_{n=m+1}^\infty \|\xi_n\|^2_n )^{1/2} 
(\sum_{n=m+1}^\infty \|w\|^2_n)^{1/2}.
\end{align*}
Thus, as $m \to \infty$
$$
\vertiii {y^* - T^*(\sum_{n=1}^m \xi_n) } \leq \left(\sum_{n=m+1}^\infty \|\xi_n\|^2_n \right)^{1/2}  \to 0.
$$
%tikun
But $T^*(\sum_{n=1}^m \xi_n)$ belongs to $T^* W^*$. Therefore $\ol{T^* W^*} = Y^*$.
\end{proof}

\br

Now, applying Lemma \ref{dense} to the situation in Lemma \ref{fabian}, with  $W = C(X)$,
we clearly have $Y =V$ and $T =j$.
Thus we have that $\ol{j^*( C(X)^*)} = V^*$ and the assertion of Lemma \ref{fabian}(2) follows easily.

\br

\section{Apendix B by N. Monod: A negative answer in the case of $\Bcal(H)$}

%
%\author[Nicolas Monod]{Nicolas Monod}
%\address{\'Ecole Polytechnique Fédérale de Lausanne (EPFL)\\
%CH–1015 Lausanne,
%Switzerland}
%\thanks{I am grateful to Eli for his comments and for welcoming this appendix.}
%
%\begin{document}

%\begin{abstract}

%\end{abstract}
%\maketitle

%\begin{question}%[David Kazhdan and Alexander Yom Din]
%Fix $\delta>0$. Let $V$ be a Banach space equipped with a linear isometric action of a group $G$.
%
%Let $\alpha$ be an element of the dual $V^*$ such that $\|\alpha\|=1$ and $\|g\alpha - \alpha\| \leq \delta/10$ 
%for all $g\in G$.
%
%\begin{enumerate}[(i)]
%\item 
%Does there exist a $G$-invariant $\beta\in V^*$ with $\|\alpha - \beta\| \leq \delta$\,?\label{pt:q:gen}
%%
%\item Specifically, does this hold for $V^*= \mathscr{B}(H)$, the space of bounded operators of a Hilbert space $H$? \label{pt:q:B}
%\end{enumerate}
%
%\noindent
%(The predual $V$ for~\eqref{pt:q:B} is the space of trace-classs operators.)
%\end{question*}

%In~\cite{Glasner_fixed-point_arx} 
In the above Eli Glasner proved (among other things) that 
Question~\ref{conj-K} has a negative answer in general, including for some weak-$*$ closed subspaces of 
$\Bcal(H)$, suggesting that \ref{conj-K} also has a negative answer for $\Bcal(H) = V^*$
\footnote{The predual $V$ for $\Bcal(H)$ is the space of trace-classs operators.
See the remark following Question \ref{conj-K}.}.

\begin{thm}
Question~\ref{conj-K} has a negative answer also in the case where $V^* = \Bcal(H)$.
\end{thm}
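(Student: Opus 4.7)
My plan is to transport the counterexample of Theorem~\ref{thm-CE} into the trace-class setting by constructing an equivariant, $1$-complemented, isometric embedding of Glasner's Banach space $V$ into the trace class $S^{1}(H)$ (the predual of $\Bcal(H)$). Once this is done, the counterexample in $\Bcal(H)$ follows by a short duality argument.

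More precisely, let $(V, h, \xi)$ be the data of Theorem~\ref{thm-CE} for the given $\delta$: a separable Rosenthal Banach space $V$ with an isometric $F_{2}$-action $h$, and an element $\xi \in V^{*}$ of norm one satisfying $\sup_{g \in G}\|g\xi-\xi\|_{V^{*}} \le \delta$, such that the only $G$-fixed point of $V^{*}$ is $0$. The first step is to build a separable Hilbert space $H$, a unitary representation $U \colon F_{2} \to \Ucal(H)$, an equivariant isometric linear embedding $j \colon V \hookrightarrow S^{1}(H)$, and an equivariant contractive linear surjection $P \colon S^{1}(H) \twoheadrightarrow V$ with $P \circ j = \id_{V}$, where $F_{2}$ acts on $S^{1}(H)$ by the isometric action $T \mapsto U_{g} T U_{g}^{*}$.

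Granted such a pair $(j, P)$, I would dualize to obtain an equivariant, weak-$*$ continuous isometric embedding $\iota := P^{*} \colon V^{*} \hookrightarrow \Bcal(H)$ and an equivariant, weak-$*$ continuous contractive surjection $\sigma := j^{*} \colon \Bcal(H) \twoheadrightarrow V^{*}$ with $\sigma \circ \iota = \id_{V^{*}}$. The composition $\Pi := \iota \circ \sigma$ is a norm-one $G$-equivariant linear projection of $\Bcal(H)$ onto $\iota(V^{*})$ that fixes $\tilde\xi := \iota(\xi)$. By equivariance and the isometricity of $\iota$, the element $\tilde\xi$ has norm one and satisfies $\sup_{g} \|g\tilde\xi - \tilde\xi\|_{\Bcal(H)} \le \delta$. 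If $\beta \in \Bcal(H)$ were $G$-fixed with $\|\beta - \tilde\xi\| \le \delta$, then $\sigma(\beta) \in V^{*}$ would be $G$-fixed, hence $0$ by Theorem~\ref{thm-CE}(1); so $\Pi(\beta) = 0$, and applying the norm-one projection $\Pi$ to $\beta - \tilde\xi$ yields
$$
1 = \|\tilde\xi\| = \|\Pi(\beta - \tilde\xi)\| \le \|\beta - \tilde\xi\| \le \delta < 1,
$$
a contradiction.

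The main obstacle is the construction of the equivariant pair $(j, P)$, that is, realizing Glasner's abstract Banach space $V$ as a $1$-complemented $G$-invariant subspace of some trace class $S^{1}(H)$. Since $G = F_{2}$ is non-amenable, one cannot produce the projection $P$ by averaging. A natural route is to exploit the DFJP presentation of $V$ as an $\ell_{2}$-sum of seminormed pieces (cf.\ Lemma~\ref{fabian}) and to realize $j$ via Hilbert-Schmidt integral kernels on a Koopman-type Hilbert space $H = L_{2}(X, \mu)$ attached to the underlying strongly proximal tame system $(X, G)$, with $P$ produced from a canonical conditional-expectation-type map onto $j(V)$ arising from the ambient Hilbert-space inner product. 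An alternative approach, bypassing Glasner's $V$ altogether, would be to construct the counterexample directly in $\Bcal(H)$: exhibit a unitary representation $\rho \colon F_{2} \to \Ucal(H)$ and an operator $T \in \Bcal(H)$ of norm one that approximately commutes with $\rho(F_{2})$ yet lies at distance at least $\delta$ from the commutant $\rho(F_{2})'$---such a $T$ is exactly an explicit failure of the ``approximate Schur lemma'' that originally motivated Question~\ref{conj-K}.
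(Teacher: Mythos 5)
Your duality scaffolding is internally correct: \emph{if} an equivariant isometric embedding $j\colon V\to S^1(H)$ and an equivariant contraction $P$ with $P\circ j=\id_V$ existed, the projection argument you give would indeed transport Theorem \ref{thm-CE} into $\Bcal(H)$. But the construction of $(j,P)$, which you rightly call the main obstacle, is not merely missing from your proposal --- it is provably impossible, already for $j$ alone and with no equivariance demanded. The trace class $S^1(H)$ is the predual of the von Neumann algebra $\Bcal(H)$, hence L-embedded (as recalled in Section \ref{sec-RN}) and in particular weakly sequentially complete, and every norm-closed subspace inherits weak sequential completeness. The space $V$ of Theorem \ref{thm-CE} is Rosenthal, so by Rosenthal's $\ell_1$-theorem every bounded sequence in $V$ has a weakly Cauchy subsequence; if $V$ embedded even isomorphically into $S^1(H)$, such subsequences would converge weakly, the unit ball of $V$ would be weakly sequentially compact, and by Eberlein--\v{S}mulian $V$ would be reflexive. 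It is not: $V$ is separable by Theorem \ref{t:general}, so reflexivity would force $V^*$ to be norm separable, and Theorem \ref{RN} applied to $Q=\overline{\conv}^{w^*}(\al(X))$ would yield a $G$-fixed point in $Q$; but $\al(X)$ is the unique minimal subset of the strongly proximal affine system $(Q,G)$ and is nontrivial, so $Q$ has no fixed point (exactly as in the proof of Theorem \ref{thm-CE}(1)). Thus no such $j$ exists for any Hilbert space $H$, and your "Hilbert--Schmidt kernel plus conditional-expectation" route cannot be repaired.

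Your fallback suggestion --- exhibit directly a unitary representation $\rho$ of a free group and a norm-one $T$ that almost commutes with $\rho(G)$ yet stays far from the commutant $\rho(G)'$ --- is precisely what the paper's appendix (due to Monod) carries out, but in your write-up it is a restatement of the goal, not a proof; all the mathematical content of the theorem lies in that construction. Concretely, the paper takes $G$ free on infinitely many generators, $H=\ell^2(G)$ with the adjoint action $g.T=\lambda(g)T\lambda(g)^{-1}$, and for $2n$ generator letters $S_n^{\pm1}$ defines $T_n\delta_x=\delta_{x_-}$ (erase the last letter when it lies in $S_n^{\pm1}$, and $0$ otherwise). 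A direct computation shows $\|g.T_n-T_n\|\le 2$ for all $g\in G$, since the commutator is a sum of two partial isometries, while testing against $\ch_{S_n^{\pm1}}$ (using $T_n\ch_{S_n^{\pm1}}=2n\delta_e$, $T_n\delta_e=0$, and Kesten's formula $\|\lambda(\ch_{S_n^{\pm1}})\|=2\sqrt{2n-1}$) shows that every $\beta$ commuting with $\lambda$ satisfies $\|T_n-\beta\|>\sqrt{2n}/3$. Since $\|T_n\|=\sqrt{2n}$, normalizing and letting $n\to\infty$ drives the invariance defect to $0$ while the distance to the fixed-point set stays above $1/3$, refuting Question \ref{conj-K} for $V^*=\Bcal(H)$. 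Without some construction of this kind, your proposal does not prove the theorem.
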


\emph{Proof}.
We shall use ideas introduced by Bo{\.z}ejko--Fendler~\cite{Bozejko-Fendler91} in the context of Herz--Schur multipliers and uniformly bounded representations. This leads to a concrete and self-contained construction as follows.

Let $G$ be the free group on an infinite set $S$ of generators. For any non-trivial $x\in G$, denote by $x_-$ the element obtained by erasing the right-most letter in the reduced word for $x$. The Hilbert space $H$ for question~\ref{conj-K} will be $H = \ell^2(G)$ with Hilbert basis $(\delta_x)_{x\in G}$. The $G$-action on 
$\Bcal(H)$ is the adjoint representation associated to the left regular representation $\lambda$ on $\ell^2(G)$. That is, for $T\in \Bcal(H)$ and $g\in G$, we define $g.T = \lambda(g)\circ T \circ \lambda(g)^{-1}$. This is indeed isometric and induced by an isometric representation on the predual $V$. Below, all norms are understood in 
$V^*=\Bcal(H)$ unless specified otherwise with a subscript such at $\|\cdot\|_{\ell^2}$.

Given $n>0$, we choose a subset $S_n \subset S$ of $n$ generators. We define $T_n\in \Bcal(H)$ by requiring $T_n \delta_x = \delta_{x_-}$ if $x_{-}^{-1} x \in S_n^{\pm 1}$ (with $x\neq e$) and $T_n \delta_x = 0$ in all other cases; this defines a bounded operator.

Fix $g\in G$.  Then $T_n$ \emph{almost} commutes with $\lambda(g)$ because $(gx)_- = g (x_-)$ holds unless $x$ is completely cancelled by $g$ (or $x$ is already trivial). Let us compute the commutator when this complete cancellation occurs. Write $g$ in reduced form as $g=s_{1}^{\epsilon_1} \cdots s_{k}^{\epsilon_k}$ with $k\in\N$, $s_i\in S$ and $\epsilon_i=\pm 1$ (we can assume $k>0$). Then $x=s_{k}^{-\epsilon_k} \cdots s_{h+1}^{-\epsilon_{h+1}}$ for some $h<k$. Thus $g x = s_{1}^{\epsilon_1} \cdots s_{h}^{\epsilon_h}$. Now we have
\[
\big( T_n \circ \lambda(g) - \lambda(g) \circ T_n \big) \delta_x = \delta_{s_{1}^{\epsilon_1} \cdots s_{{h-1}}^{\epsilon_{h-1}}} - \delta_{s_{1}^{\epsilon_1} \cdots s_{{h+1}}^{\epsilon_{h+1}}}
\]
provided $s_{h}$ and $s_{h+1}$ are in $S_n$ (otherwise the corresponding term simply does not occur). This implies the estimate
\[
\big\| T_n \circ \lambda(g) - \lambda(g) \circ T_n \big\|  \leq2 \kern5mm\forall\, g\in G \ \forall\,n
\]
because, $g$ being fixed, only one $x\in G$ gives rise to the pair of elements ${s_{1}^{\epsilon_1} \cdots s_{{h-1}}^{\epsilon_{h-1}}}$ and ${s_{1}^{\epsilon_1} \cdots s_{{h+1}}^{\epsilon_{h+1}}}$.
In other words, this commutator is a sum of two partial isometries. 
(A more high-tech argument with Riesz--Thorin interpolation can be found in~\cite[\S2]{Bozejko-Fendler91}.)

\medskip

At this point we have elements $T_n\in V^*= \Bcal(H)$ with $\| g. T_n - T_n\|\leq 2$ for all $g\in G$. We now examine the distance
\[
d(T_n, {V^*}^G) := \inf\big\{ \| T_n - \beta \| :  \text{ $\beta$ is $G$-fixed } \big\}
\]
from $T_n$ to any $G$-fixed point in $\Bcal(H)$.

We have $T_n \ch_{S_n^{\pm 1}} = 2n \delta_e$ and use that $\beta$ commutes with $\lambda$ to compute
\begin{align*}
2n = \| T_n \ch_{S_n^{\pm 1}} \|_{\ell^2} &\leq \| \beta \ch_{S_n^{\pm 1}} \|_{\ell^2} + \| T_n - \beta \| \cdot  \| \ch_{S_n^{\pm 1}} \|_{\ell^2}\\
  &= \| \lambda(\ch_{S_n^{\pm 1}}) \beta \delta_e \|_{\ell^2} + \sqrt{2n} \,\| T_n - \beta \| \\
&\leq \| \lambda(\ch_{S_n^{\pm 1}}) \|\cdot   \| \beta \delta_e \|_{\ell^2} + \sqrt{2n} \,\| T_n - \beta \| \\
&\leq \| \lambda(\ch_{S_n^{\pm 1}}) \|\cdot   \| T_n - \beta \| + \sqrt{2n} \,\| T_n - \beta \|, \\
\end{align*}
where at the end we used $T_n \delta_e = 0$. The norm $\| \lambda(\ch_{S_n^{\pm 1}}) \|$ is well-known to be $2\sqrt{2n-1}$, since it is a spectral radius that can be computed by enumerating paths in a tree, see Thm.~3 in~\cite{Kesten59}. To be very precise: we can consider $H=\ell^2(G)$ as a multiple of the regular representation of the subgroup $F_n$ generated by $S_n$ and Kesten's computation takes place in this $\ell^2(F_n)$; the resulting norm coincides with $\| \lambda(\ch_{S_n^{\pm 1}}) \|$ in $\ell^2(G)$. Now the above inequalities imply
\[
d(T_n, {V^*}^G)  > \frac{\sqrt{2n}}{3}\kern5mm \forall\,n.
\]
We have therefore our answer to the original question by defining $\alpha\in V^* = \Bcal(H)$ to be the unit vector corresponding to $T_n$ and letting $n\to\infty$.\qed

%===========================bibliographie===================================
%
%\bibliographystyle{amsalpha}
%\bibliography{../../BIB/ma_bib}

\br

\br

\br

Nicolas Monod's address is:
\address{\'Ecole Polytechnique F\'ed\'erale de Lausanne (EPFL)
CH--1015 Lausanne, Switzerland}

\email{nicolas.monod@epfl.ch}

%\author[Nicolas Monod]{Nicolas Monod}
%\address{\'Ecole Polytechnique Fédérale de Lausanne (EPFL)\\
%CH–1015 Lausanne,
%Switzerland}

\br

\end{document}